\tikzset{
  optree/.style={scale=.5,thick,grow'=up,level distance=10mm,inner sep=1pt},
  comp/.style={draw=none,circle,fill,line width=0,inner sep=0pt},
  dot/.style={draw,circle,fill,inner sep=0pt,minimum width=3pt},
  circ/.style={draw,circle,inner sep=1pt,minimum width=4mm},
  emptycirc/.style={draw,circle,inner sep=1pt,minimum width=2mm},  
  root/.style={level distance=10mm,inner sep=1pt},
  leaf/.style={draw=none,circle,fill,line width=0,inner sep=0pt},
  nodot/.style={draw,circle,inner sep=1pt},
}
\definecolor{Chocolat}{rgb}{0.36, 0.2, 0.09}
\definecolor{BleuTresFonce}{rgb}{0.215, 0.215, 0.36}
\theoremstyle{plain}
\newtheorem{lemma}{Lemma}[section]
\newtheorem{theorem}[lemma]{Theorem}
\newtheorem{corollary}[lemma]{Corollary}
\newtheorem{proposition}[lemma]{Proposition}
\theoremstyle{definition}
\newtheorem{definition}[lemma]{Definition}
\newtheorem{remark}[lemma]{\sc Remark}
\DeclareMathAlphabet{\pazocal}{OMS}{zplm}{m}{n}
\def\calA{\pazocal{A}}
\def\calB{\pazocal{B}}
\def\calC{\pazocal{C}}
\def\calF{\pazocal{F}}
\def\calO{\pazocal{O}}
\def\calP{\pazocal{P}}
\def\calQ{\pazocal{Q}}
\def\calR{\pazocal{R}}
\def\calX{\pazocal{X}}
\DeclareMathOperator{\Lie}{Lie}
\DeclareMathOperator{\Com}{Com}
\DeclareMathOperator{\Zinb}{Zinb}
\DeclareMathOperator{\Ass}{Ass}
\DeclareMathOperator{\As}{As}
\DeclareMathOperator{\PL}{PreLie}
\DeclareMathOperator{\RB}{RB}
\DeclareMathOperator{\RF}{RatFct}
\DeclareMathOperator{\Mou}{Mould}
\DeclareMathOperator{\ad}{ad}
\def\cd{\mathsf{d}}
\def\cE{\mathsf{E}}
\def\cV{\mathsf{V}}
\DeclareMathOperator{\id}{id}
\DeclareMathOperator{\Hom}{Hom}
\DeclareMathOperator{\End}{End}
\DeclareMathAlphabet{\mathbbold}{U}{bbold}{m}{n}
\def\k{\mathbbold{k}}
\author{Vladimir Dotsenko}
\address{Institut de Recherche Math\'ematique Avanc\'ee, 
Universit\'e de Strasbourg et CNRS, 
7 rue Ren\'e-Descartes, 
67084 Strasbourg Cedex, 
France}
\email{vdotsenko@unistra.fr}
\author{Sergey Shadrin}
\address{Korteweg-de Vries Institute for Mathematics, University of Amsterdam, P. O. Box 94248, 1090 GE Amsterdam, The Netherlands}
\email{s.shadrin@uva.nl}
\title[Hidden structures behind ambient symmetries of the Maurer--Cartan equation]{Hidden structures behind ambient symmetries\\ of the Maurer--Cartan equation}
\begin{document}

\setcounter{tocdepth}{1}

\begin{abstract} 
For every differential graded Lie algebra $\mathfrak{g}$ one can define two different group actions on the Maurer--Cartan elements: the ubiquitous gauge action and the action of $\Lie_\infty$-isotopies of $\mathfrak{g}$, which we call the ambient action. In this note, we explain how the assertion of gauge triviality of a homologically trivial ambient action relates to the calculus of dendriform, Zinbiel, and Rota--Baxter algebras, and to Eulerian idempotents. In particular, we exhibit new relationships between these algebraic structures and the operad of rational functions defined by Loday. 
\end{abstract}

\maketitle

\section*{Introduction}

The goal of this paper is to exhibit some algebraic and combinatorial phenomena that we observed when thinking about a topic in homotopical algebra, namely the action of $\Lie_\infty$-isotopies of a complete differential graded (dg) Lie algebra on the set of Maurer--Cartan elements of that algebra. Specifically, in our joint work with Vaintrob and Vallette~\cite{MR4726567}, we needed the fact that actions of two such isotopies that are in a sense equivalent (that is, are exponentials of homologous $\Lie_\infty$-derivations) give Maurer--Cartan elements that are equivalent in the usual sense, that is related by the gauge action. The proof of that result in~\cite[Prop.~3.17]{MR4726567}, essentially going back to \cite[Lemma 3]{MR3335859}, uses a general homotopy-theoretic argument; in this paper, we shall discuss this result from other viewpoints, leading, in particular, to an explicit formula of the gauge trivialization of the homologically trivial ambient action.\par\smallskip

We begin by making this short summary more precise for non-experts.

\subsection*{Maurer--Cartan elements and their gauge symmetries}
The Maurer--Cartan equation
 \[
d\alpha+\frac12[\alpha,\alpha]=0
 \]
in dg Lie algebras is arguably one of the most important equations in mathematics. Originating in differential geometry and very prominent in mathematical physics, in the past decades it has been ubiquitous in deformation theory: according to the general philosophy that goes back to Deligne \cite{Deligne93}, Drinfeld \cite{MR3285856} and others and was made by Lurie \cite{Lur} and Pridham \cite{MR2628795},  for each reasonable structure on an algebraic or a geometric object, there exists a dg Lie algebra whose Maurer--Cartan elements are in a bijection with different choices of the structure, and which can be used to study the question of deformations of a given structure and the question of equivalence of different structures. In particular, the \emph{gauge equivalence} of Maurer--Cartan elements in complete dg Lie algebras plays an important role. A clear way to define that equivalence is via the so called ``differential trick''. Namely, we shall assume that the differential of $\mathfrak{g}$ is internal, that is, that there exists an element~$\delta$ with $[\delta,\delta]=0$ satisfying $[\delta,x]=d(x)$ for all $x\in\mathfrak{g}$. (It is possible to show that such an element can be adjoined in a consistent way, so without loss of generality we may assume its existence.) In that case, the Maurer--Cartan equation may be written as $[\delta+\alpha,\delta+\alpha]=0$, and under the completeness assumption, for each $x\in\mathfrak{g}_0$, and each Maurer--Cartan element~$\alpha$ of $\mathfrak{g}$, the formula $\exp(\ad_x)(\delta+\alpha)$ defines another Maurer--Cartan element, which is said to obtained from $\alpha$ by a gauge symmetry action of $x$.

\subsection*{Maurer--Cartan elements and ambient symmetries}
Of course, a differential graded Lie algebra itself is also an instance of a structure, and so this philosophy is, in a sense, applicable to itself. Precisely, if $\mathfrak{g}$ is a dg Lie algebra, we can encode its Lie bracket by a Maurer--Cartan element $\phi$ of the dg Lie algebra associated to the pre-Lie algebra
 \[
\mathfrak{b}:=\left(\prod_{n\ge 2}\End_{s\mathfrak{g}}(n)^{S_n},\partial,\star\right).
 \]
Generally, Maurer--Cartan elements of that dg Lie algebra encode $\Lie_\infty$-algebra structures on the underlying chain complex of $\mathfrak{g}$; restricting ourselves to dg Lie algebra structures amounts to considering elements supported at arity $2$. We may consider the twisted chain complex $\mathfrak{b}^\phi$ with the differential $\partial+[\phi,-]$. Cycles of degree zero in that complex are precisely the \emph{$\Lie_\infty$-derivations} of the Lie algebra $\mathfrak{g}$ with zero component of arity one. The associated  Lie algebra of $\mathfrak{b}$ is complete with respect to the arity filtration, and one can define the exponential map; the exponentials of the $\Lie_\infty$-derivations of $\mathfrak{g}$ of the form described above are therefore well defined and are precisely the \emph{$\Lie_\infty$-isotopies} of $\mathfrak{g}$, that is the $\Lie_\infty$-automorphisms whose arity one part is the identity map. It is possible to show that for every cycle $\lambda$ of degree zero in $\mathfrak{b}^\phi$ and for every Maurer--Cartan element $\alpha$ of $\mathfrak{g}$, the formula
 \[
\lambda.\alpha:=\sum_{n\ge 1}\frac{1}{n!}(e^\lambda)_n(\alpha,\ldots,\alpha)
 \]  
defines another Maurer--Cartan element; thus, the group of $\Lie_\infty$-isotopies of $\mathfrak{g}$ acts on Maurer--Cartan elements of $\mathfrak{g}$. We call this action the \emph{ambient action}; it is defined in a very different way from the gauge action. 

This paper emerged from contemplating the above-mentioned result needed in \cite{MR4726567} stating that if $\lambda$ is a degree zero cycle in the twisted chain complex $\mathfrak{b}^\phi$ that happens to be trivial in the homology, then its action on Maurer--Cartan elements is gauge trivial, that is, for every Maurer--Cartan element $\alpha$, the two Maurer--Cartan elements $\alpha$ and $\lambda.\alpha$ are gauge equivalent. (Strictly speaking, the context of \cite{MR4726567} requires one to work with shifted Lie algebras, but we choose to ``unshift'' them for the purpose of this paper to make it more readable for the audience of experts in combinatorial algebra.) While working on the proof of this result, we found that several well known combinatorial algebraic structures emerged along the way, and we decided to spell out the corresponding results, hoping that both the homotopy theorists and the experts in combinatorial algebra might find them useful elsewhere. The structures that are the most crucial for furnishing explicit formulas are dendriform algebras, Zinbiel algebras, and Rota--Baxter algebras. A dendriform algebra is a vector space equipped with two binary operations $\prec$ and $\succ$ satisfying the identities
\begin{gather*}
(a_1\prec a_2)\prec a_3=a_1\prec(a_2a_3),\\
(a_1\succ a_2)\prec a_3=a_1\succ(a_2\prec a_3),\\
(a_1a_2)\succ a_3=a_1\succ(a_2\succ a_3),
\end{gather*}
where $a_1a_2=a_1\prec a_2+a_1\succ a_2$; this notion was originally introduced by Loday \cite{MR1860994} with a motivation from algebraic K-theory and studied quite extensively since. A dendriform algebra in which $a_1\prec a_2=a_2\succ a_1$ is called a Zinbiel algebra; concretely, it is an algebra with one binary operation $\nu$ satisfying the identity
 \[
\nu(a_1,\nu(a_2,a_3))=\nu(\nu(a_1,a_2),a_3)+\nu(\nu(a_2,a_1),a_3).
 \]
This notion emerged in work of Cuvier \cite{MR1258404} on Leibniz cohomology, which in particular implies that the operad of Zinbiel algebras is the Koszul dual of the operad of Leibniz algebras (this duality is forever commemorated in the term ``Zinbiel'' proposed by Lemaire). Finally, a Rota--Baxter algebra is a vector space equipped with a commutative associative product $\cdot$ and an endomorphism $R$ such that 
 \[
R(a_1)\cdot R(a_2)=R(R(a_1)\cdot a_2+a_1\cdot R(a_2)).
 \]
This notion goes back to work of Baxter \cite{MR119224} that was widely popularized by Rota \cite{MR244070}; nowadays Rota--Baxter operators are considered for many different algebraic structures, not just the commutative product. We refer the reader to \cite{MR2357335} for further references concerning these structures.  

\subsection*{Structure of the paper.} In Section \ref{sec:reformulate}, we reformulate the equations describing homologically trivial ambient symmetries in a particularly suggestive way that motivates the rest of the paper. In Section \ref{sec:operadic}, we record some results in operad theory that will be useful for us; some of them, in particular, the general ``Foissy functor'' (generalizing a pre-Lie algebra construction of Foissy \cite{FoissyNotes}) and its conceptual interpretation, are, to the best of our knowledge, new. In Section \ref{sec:euler}, we use calculations in the operad of rational functions of Loday \cite{MR2676956} to give an explicit formula for the gauge trivialization on the Lie algebra level. In Section \ref{sec:dendriform}, we use calculations in free dendriform algebras to give an explicit formula for the gauge trivialization on the Lie algebra level; we then show that the formulas thus obtained are equivalent. Finally, in Appendix \ref{app:RB}, we show that the operad of Rota--Baxter commutative algebras embeds into the operad of rational functions as a suboperad.

\subsection*{Conventions. } All vector spaces in this paper are defined over a field $\k$ of characteristic zero. All chain complexes are graded homologically, with the differential of degree~$-1$. To handle suspensions of chain complexes, we introduce a formal symbol~$s$ of degree~$1$, and define, for a graded vector space~$V$, its suspension $sV$ as $\k s\otimes V$. We assume certain fluency in the language of operads and combinatorial species; the reader who requires assistance with these is referred to the monographs \cite{MR2954392} and \cite{MR1629341} respectively. For details concerning Maurer--Cartan elements, gauge symmetries, and the related pre-Lie algebra calculus, the reader is invited to consult \cite{DSV16,MR4621635}.

\subsection*{Acknowledgements.} We thank Arkady Vaintrob and Bruno Vallette for discussions of this work while working on \cite{MR4726567}. The first author was supported by Institut Universitaire de France and by the French national research agency [grant ANR-20-CE40-0016]. The second author was supported by the Netherlands Organization for Scientific Research. The first draft of this paper was completed while the first author was visiting the Banach center in Warsaw during the Simons semester ``Knots, homologies, and physics''; he thanks this institution for hospitality and excellent working conditions. He also thanks Loïc Foissy for comments on the origin of what we call ``the Foissy functor'', and the participants of the conference ``70 years of Magnus series'', especially Ruggero Bandiera, Kurusch Ebrahimi--Fard, Dominique Manchon, and Frédéric Patras for comments and useful discussions.

\section{A different formulation of homologically trivial ambient symmetries}\label{sec:reformulate}

We begin with re-organizing the formulas describing ambient symmetries in the case that we are considering. Let us introduce a larger pre-Lie algebra which will be useful to place all elements we consider on the same ground. As hinted at in the introduction, we shall first extend $\mathfrak{g}$ to be able to use the ``differential trick'' \cite[Sec.~1]{DSV16}; this amounts to considering the graded vector space 
 \[
\overline{\mathfrak{g}}=\mathfrak{g}\oplus \k\delta
 \]
where the homological degree of $\delta$ is equal to $-1$, $[\delta,\delta]=0$, and $[\delta,x]=d(x)$ for all $x\in \mathfrak{g}$. We then consider the pre-Lie algebra
 \[
\overline{\mathfrak{b}}:=\prod_{n\ge 0}\End_{s\overline{\mathfrak{g}}}(n)^{S_n},\partial,\star).
 \]
It contains $\mathfrak{b}$ as a subalgebra, but also contains $s\mathfrak{g}$ as a subspace 
 \[
\End_{s\mathfrak{g}}(0)^{S_0}\subset\End_{s\overline{\mathfrak{g}}}(0)^{S_0}.
 \]
 We shall denote the element of $\overline{\mathfrak{b}}$ corresponding to the homological shift $s\kappa$ for $\kappa\in\mathfrak{g}$ by $\tilde{\kappa}$; conversely, for an element $\rho\in\overline{\mathfrak{b}}$, we shall denote the inverse homological shift of its arity zero part by $\overline{\rho}$.

For a Maurer--Cartan element $\alpha$ of $\mathfrak{g}$, the action $\lambda\cdot\alpha$ can be explicitly written in the pre-Lie algebra $\overline{\mathfrak{b}}$ as
\begin{equation}\label{eq:Gauge1}
\widetilde{\lambda\cdot\alpha}=\overline{e^\lambda \circledcirc \left(\mathbb{1}+\widetilde{\alpha}\right)} =
\widetilde{\alpha}+\sum_{i\ge 1}\frac{1}{i!} \overline{r_\lambda^{i-1} (\lambda) \circledcirc (1+ \widetilde{\alpha})} 
\,.
\end{equation}

Suppose that $\lambda\in \mathfrak{b}^\phi$ is a cycle of degree zero that is homologous to zero, so that there exists $f \in \mathfrak{b}$ of degree one satisfying $\partial^{\varphi}(f)=\lambda$. We wish to show that the action of $\lambda$ on Maurer--Cartan elements of $\overline{\mathfrak{g}}$ is gauge trivial, in other words, using the differential trick, that for any Maurer--Cartan element $\alpha$ of $\mathfrak{g}$ there exists an element $\xi\in\mathfrak{g}_0$ such that
\begin{equation}\label{eq:GaugeMain}
\exp(\ad_\xi)(\delta+\alpha) = \delta+\lambda.\alpha 
\end{equation}
in the Lie algebra $\overline{\mathfrak{g}}$. Motivated by Formula \eqref{eq:Gauge1}, let us introduce the elements $\lambda_i, f_i\in \mathfrak{g}$ by the formulas
\begin{align*}
\widetilde{\lambda_i} := \frac{1}{(i-1)!} \overline{r_{\lambda}^{i-1} (\lambda) \circledcirc (1+ \widetilde{\alpha})}; \\
\widetilde{f_i} := \frac{1}{(i-1)!} \overline{r_{\lambda}^{i-1} (f) \circledcirc (1+ \widetilde{\alpha})}.
\end{align*}

To work with these formulas, we define an auxiliary weight grading $w$, putting $w(\lambda)=w(f)=1$ and $w(\widetilde{\alpha})=w(\widetilde{\delta})=0$, so that $w(\lambda_i)=w(f_i)=i$; this will allow us to collect and separate elements with the same weight grading.
Note that by \cite[Prop.~3]{DSV16}, we have
\begin{gather*}
\sum_{i\ge 1}\widetilde{\lambda_i}=\overline{\sum_{i\ge 1}\frac{1}{(i-1)!} r_{\lambda}^{i-1} (\lambda) \circledcirc (1+ \widetilde{\alpha})}=
\overline{\lambda\circledcirc e^{\lambda}\circledcirc(1+ \widetilde{\alpha})}=\overline{\lambda\circledcirc (1+\widetilde{\lambda.\alpha})},\\
\sum_{i\ge 1} \widetilde{f_i}=\overline{\sum_{i\ge 1}\frac{1}{(i-1)!} r_{\lambda}^{i-1} (f) \circledcirc (1+ \widetilde{\alpha})}=
\overline{f\circledcirc e^{\lambda}\circledcirc(1+ \widetilde{\alpha})}=\overline{f\circledcirc (1+\widetilde{\lambda.\alpha})} .
\end{gather*}
The first of these equations implies that 
 \[
\sum_{n\ge 1}\widetilde{\lambda_i}=\overline{\partial^\phi(f)\circledcirc (1+\widetilde{\lambda.\alpha})}=\\
\overline{(\delta\star f+\phi\star f-f\star\phi-f\star\delta)\circledcirc (1+\widetilde{\lambda.\alpha})}.
 \]
Since
 \[
(\phi\star f)\circledcirc (1+\widetilde{\lambda.\alpha})=\left[\overline{f\circledcirc (1+\widetilde{\lambda.\alpha})},\lambda.\alpha\right]
 \]
and 
 \[
[\delta,\lambda.\alpha]+\frac{1}{2}[\lambda.\alpha,\lambda.\alpha]=0,
 \] 
this implies 
 \[
\sum_{n\ge 1}\lambda_i=\left[\sum_{i\ge 1} f_i,\delta+\lambda\cdot\alpha\right]=\left[\sum_{i\ge 1} f_i,\delta+\alpha+\sum_{j\ge 1}\frac{1}{j}\lambda_j\right].
 \]
Separating elements according to their weight gradings, we have 
\begin{equation} \label{eq:DifferentialOnf-i}
[f_n,\delta+\alpha] = 
\lambda_n - \sum_{\substack{i+j = n \\ i,j\geq 1}} \frac{1}{j}[f_i,\lambda_j] \,.
\end{equation}

Examining this formula, we see that the key algebraic structure used in it is \emph{not} the Lie bracket $[-,-]$ but its weighted version. In fact, this structure can be found in the literature. In \cite{FoissyNotes}, Foissy proposed a beautiful construction of a pre-Lie algebra structure on any Lie algebra $L$ with a strictly positive weight grading, $L=\bigoplus_{n>0}L_n$; that structure is given by
 \[
x\triangleleft y:=\frac{|x|}{|x|+|y|}[x,y].
 \]
It was noticed by Al-Kaabi, Manchon and Patras in \cite{MR3818652} that if conjugated by the grading operator (the operator that multiplies $L_n$ by $n$; since the grading is strictly positive, this operator is invertible), this structure becomes
 \[
x\blacktriangleleft y:=\frac{1}{|y|}[x,y],
 \]
which is precisely what we find in Formula \eqref{eq:DifferentialOnf-i}.  

\section{Operadic constructions}\label{sec:operadic}

Let us fix an operad $\calP$ generated by binary operations. Below we shall refer to the Manin products $\medbullet$ and $\medcirc$ defined by Ginzburg and Kapranov \cite{MR1301191,MR1360619} (see also \cite{MR2427978}) that generalize the corresponding operations for quadratic algebras defined by Manin \cite{MR1016381}.

\subsection{Disuccessors and Rota--Baxter operators}

Recall that in \cite{MR3021790}, the notion of a disuccessor $\mathrm{DSu}(\calP)$ was proposed. This operad is generated by splittings $\omega_\prec$ and $\omega_\succ$ of generators $\omega$ of $\calP$, and these splittings satisfy some explicitly given identities, see \cite[Def.~2.19]{MR3021790}. The prototypical examples of a disuccessor are given by the operad of dendriform algebras, which is the disuccessor of the operad of associative algebras, the operad of pre-Lie algebras which is the disuccessor of the operad of Lie algebras, and the operad of Zinbiel algebras, which is the disuccessor of the operad of commutative associative algebras. If an operad is quadratic, the disuccessor of $\calP$ coincides with the Manin black product $\calP\medbullet\PL$, but the assumption on quadraticity is not necessary. In fact, this notion can also be guessed from the results of Schedler \cite{MR3203374}: generalizing Proposition 3.3.3 of \emph{loc. cit.} in the following way, as a direct calculation shows.

\begin{proposition}
A structure of a $\mathrm{DSu}(\calP)$-algebra on a vector space $A$ is equivalent to a structure of a $\k[t]$-linear twisted $\calP$-algebra on the free $\k[t]$-module generated by a $A$, where both $t$ and $A$ are placed in arity one. The equivalence is given by 
 \[
\omega(a_1,a_2)=t(1)\left(\omega_\prec(a_1,a_2)\right)(2)-t(2)\left(\omega_\succ(a_2,a_1)\right)(1).
 \]
for all generators $\omega$ of $\calP$ and all $a_1,a_2\in A$. Here, for a linear species $\calX$, we denote by $x(i)$ the element corresponding to $x$ in $\calX(\left\{i\right\})$.
\end{proposition}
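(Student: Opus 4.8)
The plan is to verify the claimed equivalence by unpacking both sides into the same combinatorial species-level data and matching the structure maps. First I would recall the precise definition of a $\mathrm{DSu}(\calP)$-algebra from \cite[Def.~2.19]{MR3021790}: it is a vector space with, for each binary generator $\omega$ of $\calP$, a pair of splittings $\omega_\prec,\omega_\succ$ with $\omega = \omega_\prec + \omega_\succ$ (suitably read with the $S_2$-action), subject to the ``disuccessor'' relations, which are obtained from the relations of $\calP$ by the prescribed splitting rule. On the other side, a $\k[t]$-linear twisted $\calP$-algebra on the free $\k[t]$-module on $A$ (with $t$ and $A$ both in arity one) means: the underlying species is $A\otimes\k[t]$ placed in a single arity, and one has $\calP$-structure maps that are $\k[t]$-linear; the ``twisted'' qualifier records that $t$ itself is treated as an arity-one species element, so composing $\calP$-operations inserts copies of $t$ at the inputs in the bookkeeping. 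The strategy is to show these two categories of algebras are isomorphic via the stated formula $\omega(a_1,a_2) = t(1)(\omega_\prec(a_1,a_2))(2) - t(2)(\omega_\succ(a_2,a_1))(1)$ and its inverse, which recovers $\omega_\prec$ and $\omega_\succ$ by reading off the coefficients of the two monomials in $t(1)$ and $t(2)$.

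The key steps, in order, are: (1) Set up the dictionary. Describe the free $\k[t]$-module on $A$ as the linear species $\calA$ concentrated in arity one with $\calA(\{i\}) \cong A\otimes \k[t]$, and make the notation $x(i)$ and $t(i)$ precise, so that an element like $t(1)(y)(2)$ is an unambiguous element of the relevant composite species. (2) Show the formula is well-defined and $S_2$-equivariant, i.e.\ that the right-hand side transforms correctly under swapping $a_1\leftrightarrow a_2$; this is where the deliberate asymmetry $\omega_\prec(a_1,a_2)$ versus $\omega_\succ(a_2,a_1)$ and the minus sign are exactly calibrated so that the output matches the symmetry type of $\omega$ (and here one must be careful with signs coming from the homological grading / Koszul sign rule). (3) Invert the assignment: given a $\k[t]$-linear twisted $\calP$-structure, define $\omega_\prec$ and $\omega_\succ$ on $A$ by extracting the coefficients of the two $t$-monomials, and check this lands in $A$ (uses $\k[t]$-linearity and the arity-one placement of $t$). (4) The crucial verification: the disuccessor relations for the $\omega_\prec,\omega_\succ$ hold if and only if the $\calP$-relations hold for the derived $\omega$ on $\calA$. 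This is the ``direct calculation'' referred to in the statement, and it is really the content of Schedler's Proposition 3.3.3 in disguise: one substitutes the formula into each relation of $\calP$, expands using $\k[t]$-linearity, and sorts the resulting terms by their $t$-monomial type; the relation of $\calP$ splits into exactly the family of relations defining $\mathrm{DSu}(\calP)$, one for each pattern of where the $t$'s land.

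The main obstacle I expect is step (4) in full generality: since $\calP$ is only assumed to be generated by binary operations (not necessarily quadratic, and its relations may involve compositions of arbitrary arity of the binary generators), one must argue that the splitting-by-$t$-monomial procedure applied to an arbitrary relator of $\calP$ reproduces precisely the generating relations of $\mathrm{DSu}(\calP)$ as defined in \cite{MR3021790}, with no extra or missing relations. The clean way to handle this is to observe that both sides are defined by generators-and-relations from the same data (the generators and relations of $\calP$) via the same mechanical splitting rule, so it suffices to check the claim on the level of a single abstract binary generator composed in the two ways that appear in a relation --- i.e.\ to check that applying the formula to a generic tree monomial $\omega(\omega(a_1,a_2),a_3)$-type expression and collecting $t$-coefficients yields exactly the terms appearing in \cite[Def.~2.19]{MR3021790}. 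Once this local compatibility is established, functoriality of ``presentation by generators and relations'' upgrades it to the asserted equivalence of algebra categories, with signs tracked throughout by the Koszul rule as dictated by our homological conventions.
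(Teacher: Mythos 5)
Your plan is essentially the paper's own argument: the paper gives no details beyond ``as a direct calculation shows,'' and your steps (1)--(4) --- in particular identifying the choice of distinguished leaf indexing the relations of $\mathrm{DSu}(\calP)$ with the unique input position that does not carry a $t$ after expanding a tree monomial by $\k[t]$-linearity --- are precisely that calculation. One minor correction: $A$ is an ungraded vector space, so there are no Koszul signs to track; the minus sign and the argument swap in the defining formula are calibrated solely to make the right-hand side $S_2$-equivariant and to reproduce the splitting conventions of \cite[Def.~2.19]{MR3021790}.
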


Recall that a Rota--Baxter operator on a $\calP$-algebra $A$ is a map $R\colon A\to A$ satisfying
 \[
\omega(R(a_1),R(a_2))=R(\omega(R(a_1),a_2)+\omega(a_1,R(a_2)))
 \]
for all generators $\omega$ of $\calP$ and all $a_1,a_2\in A$. We shall denote the operad of $\calP$-algebras with a Rota--Baxter operator by $\RB(\calP)$.

It is established in \cite[Th.~4.4]{MR3021790} that there exists a morphism of operads 
 \[
\mathrm{DSu}(\calP)\to\RB(\calP)
 \]
given by 
 \[
\omega_\prec\mapsto \omega\circ_2 R,\quad
\omega_\succ\mapsto \omega\circ_1 R.
 \]
Equivalently, for every Rota--Baxter operator $R$ on a $\calP$-algebra $A$, the operations 
 \[
\omega_\prec(a_1,a_2)=\omega(a_1,R(a_2)),\quad
\omega_\succ(a_1,a_2)=\omega(R(a_1),a_2)
 \]
define on $A$ a structure of an $\mathrm{DSu}(\calP)$-algebra (for $\calP$ one of the operads $\Com$, $\Lie$, $\Ass$, this goes back to \cite[Prop.~5.1]{MR1846958}, and for general binary quadratic operads it is proved in \cite[Th.~4.2]{MR2661522}). We complement this with the following result which is proved by a straightforward computation.

\begin{proposition}
There is a morphism of operads
 \[
\RB(\calP)\to\calP\underset{\mathrm{H}}{\otimes}\RB(\Com)
 \]
sending each operation $\omega\in\calP$ to $\omega\otimes\mu$, and $R$ to $\id\otimes R$ (here $\mu$ is the generator of $\Com$).
\end{proposition}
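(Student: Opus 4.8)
The plan is to build the morphism from the standard presentation of $\RB(\calP)$ by generators and relations. Recall that $\RB(\calP)$ is generated by the binary generators $\omega$ of $\calP$ together with one unary operation $R$, subject to the defining relations of $\calP$ among the $\omega$'s and, for every generator $\omega$, the Rota--Baxter relation
 \[
\omega(R(a_1),R(a_2))=R\bigl(\omega(R(a_1),a_2)+\omega(a_1,R(a_2))\bigr).
 \]
Thus it suffices to pick images for these generators in $\calP\underset{\mathrm{H}}{\otimes}\RB(\Com)$ --- namely $\omega\mapsto\omega\otimes\mu\in\calP(2)\otimes\RB(\Com)(2)$ and $R\mapsto\id\otimes R\in\calP(1)\otimes\RB(\Com)(1)$, with $\id\in\calP(1)$ the operadic unit and $R$ on the right the Rota--Baxter generator of $\RB(\Com)$ --- and then to check that the morphism induced on the free operad on the binary generators of $\calP$ and the unary $R$ annihilates each defining relation; it then descends to the required morphism of operads.

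First I would dispose of the relations coming from $\calP$. Composition in a Hadamard product is performed componentwise, so under $\omega\mapsto\omega\otimes\mu$ a tree monomial $T$ in the $\omega$'s is sent to $T[\omega]\otimes T[\mu]$, where $T[\omega]\in\calP(n)$ and $T[\mu]\in\Com(n)$ are the evaluations of $T$ in $\calP$ and in $\Com$ respectively. Since $\Com(n)$ is one-dimensional and $\mu$ is commutative and associative, $T[\mu]$ equals the unique $n$-ary operation $\mu_n\in\Com(n)$, independently of the shape of $T$ and of its leaf labelling; hence a $\calP$-relation $\sum_i c_i T_i=0$, homogeneous of arity $n$, is sent to $\bigl(\sum_i c_i T_i[\omega]\bigr)\otimes\mu_n=0$. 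Conceptually this is just the statement that $\Com$ is the monoidal unit for the Hadamard product, so that the morphism $\Com\to\RB(\Com)$, $\mu\mapsto\mu$, induces $\calP\cong\calP\underset{\mathrm{H}}{\otimes}\Com\to\calP\underset{\mathrm{H}}{\otimes}\RB(\Com)$, which already realises $\omega\mapsto\omega\otimes\mu$ as an operad morphism.

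Next I would check the Rota--Baxter relations. Applying the componentwise composition rule once, together with the right-unit axiom $\omega\circ_i\id=\omega$, the left-hand side of the relation attached to a generator $\omega$ is sent to $\omega\otimes\mu(R(a_1),R(a_2))$ and its right-hand side to $\omega\otimes R\bigl(\mu(R(a_1),a_2)+\mu(a_1,R(a_2))\bigr)$. These two elements of $\calP(2)\otimes\RB(\Com)(2)$ agree precisely because
 \[
\mu(R(a_1),R(a_2))=R\bigl(\mu(R(a_1),a_2)+\mu(a_1,R(a_2))\bigr)
 \]
holds in $\RB(\Com)$, this being nothing but the Rota--Baxter identity satisfied there by the commutative product $\mu$ and the operator $R$. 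Hence every Rota--Baxter relation of $\RB(\calP)$ is annihilated as well, and the morphism is constructed.

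I do not anticipate a genuine obstacle --- the whole verification is routine --- but the point that needs care is the bookkeeping of Hadamard composition and the resulting ``decoupling'' of the two tensor factors: the first factor sees only $\calP$, whereas the second ignores the combinatorics of $\calP$ entirely, collapsing to $\mu_n$ in the absence of $R$'s and otherwise carrying only the universal commutative Rota--Baxter calculus. Packaging the $\calP$-relation step through the unit property of $\Com$ for the Hadamard product, rather than checking those relations one by one, is what I would do to keep the argument clean.
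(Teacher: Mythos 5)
Your proof is correct and is exactly the ``straightforward computation'' that the paper invokes without writing out: one checks on the presentation of $\RB(\calP)$ that the relations of $\calP$ are killed because $\Com$ is the Hadamard unit, and that the Rota--Baxter relations are killed because they hold in the second tensor factor $\RB(\Com)$ while the first factor collapses to $\omega$ by the unit axiom. Nothing further is needed.
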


As a consequence, we have the following result featuring the operad of Zinbiel algebras $\Zinb=\mathrm{DSu}(\Com)$.

\begin{corollary}\label{cor:RBC-P}
We have a composite map 
 \[
\mathrm{DSu}(\calP)\to\RB(\calP)\to \calP\underset{\mathrm{H}}{\otimes}\RB(\Com),
 \]
implying that the tensor product of a $\calP$-algebra and a $\RB(\Com)$-algebra is equipped with a canonical $\mathrm{DSu}(\calP)$-algebra structure. Moreover, there is a commutative diagram 
 \[
 \xymatrix@M=6pt{
\mathrm{DSu}(\calP)\ar@{->}^{}[rr] \ar@{->}_{}[d] & & \calP\underset{\mathrm{H}}{\otimes}\Zinb \ar@{->}^{}[d]  \\ 
\RB(\calP)\ar@{->}_{}[rr]  & & \calP\underset{\mathrm{H}}{\otimes}\RB(\Com)      
 } 
 \]
\end{corollary}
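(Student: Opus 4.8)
The corollary has two assertions: first, the existence of the composite $\mathrm{DSu}(\calP)\to\RB(\calP)\to\calP\underset{\mathrm{H}}{\otimes}\RB(\Com)$ together with its consequence on tensor products; second, the commutativity of the square relating $\mathrm{DSu}(\calP)$, $\calP\underset{\mathrm{H}}{\otimes}\Zinb$, and $\RB(\calP)$, $\calP\underset{\mathrm{H}}{\otimes}\RB(\Com)$. The first morphism $\mathrm{DSu}(\calP)\to\RB(\calP)$ is \cite[Th.~4.4]{MR3021790} recalled just above, and the second morphism $\RB(\calP)\to\calP\underset{\mathrm{H}}{\otimes}\RB(\Com)$ is the Proposition immediately preceding the corollary; composing them gives the top-and-left-then-bottom path, and since a morphism of operads into an endomorphism operad is the same thing as an algebra structure, evaluating on $\End_{A\otimes B}$ where $A$ is a $\calP$-algebra and $B$ an $\RB(\Com)$-algebra yields the claimed $\mathrm{DSu}(\calP)$-structure on $A\otimes B$. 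So the real content is producing the right vertical map $\calP\underset{\mathrm{H}}{\otimes}\Zinb\to\calP\underset{\mathrm{H}}{\otimes}\RB(\Com)$ and checking the square commutes.

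First I would construct the right vertical arrow. By functoriality of the Hadamard product $\underset{\mathrm{H}}{\otimes}$ in each slot, it suffices to produce a morphism of operads $\Zinb\to\RB(\Com)$; then tensoring with $\id_\calP$ gives the desired map. But $\Zinb=\mathrm{DSu}(\Com)$ by definition, and the morphism $\mathrm{DSu}(\Com)\to\RB(\Com)$ is exactly the $\calP=\Com$ instance of \cite[Th.~4.4]{MR3021790}: it sends the Zinbiel generator $\nu=\mu_\prec$ to $\mu\circ_2 R$, i.e. $\nu(a_1,a_2)=\mu(a_1,R(a_2))=a_1\cdot R(a_2)$ (and the other splitting $\mu_\succ$ to $\mu\circ_1 R$, which is forced to be $\nu$ with arguments swapped because $\mu$ is commutative — consistent with the Zinbiel relation $a_1\prec a_2=a_2\succ a_1$). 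So all four arrows in the square are now defined: left edge $\mathrm{DSu}(\calP)\to\RB(\calP)$ of \cite[Th.~4.4]{MR3021790}; bottom edge $\RB(\calP)\to\calP\underset{\mathrm{H}}{\otimes}\RB(\Com)$ of the preceding Proposition; right edge $\id_\calP\underset{\mathrm{H}}{\otimes}\bigl(\mathrm{DSu}(\Com)\to\RB(\Com)\bigr)$; and top edge the morphism $\mathrm{DSu}(\calP)\to\calP\underset{\mathrm{H}}{\otimes}\Zinb$, which one defines precisely so as to make the square commute — or, more intrinsically, by observing that $\mathrm{DSu}$ is functorial and $\mathrm{DSu}(\calP)=\calP\medbullet\PL$ in the quadratic case, while in general one can write down the top map on generators directly and only afterward verify it respects the defining relations of $\mathrm{DSu}(\calP)$.

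The commutativity check is then a computation on generators, since all operads involved are generated in arity two. Tracking a generator $\omega$ of $\calP$: along the left-then-bottom path, $\omega_\prec\mapsto\omega\circ_2 R\mapsto(\omega\otimes\mu)\circ_2(\id\otimes R)=\omega\otimes(\mu\circ_2 R)$, and $\omega_\succ\mapsto\omega\circ_1 R\mapsto\omega\otimes(\mu\circ_1 R)$; along the top-then-right path, $\omega_\prec\mapsto\omega\otimes\mu_\prec\mapsto\omega\otimes(\mu\circ_2 R)$ and $\omega_\succ\mapsto\omega\otimes\mu_\succ\mapsto\omega\otimes(\mu\circ_1 R)$. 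These agree, so the square commutes on generators and hence everywhere. The main obstacle is not any hard computation but a bookkeeping subtlety: one must make sure the top map $\mathrm{DSu}(\calP)\to\calP\underset{\mathrm{H}}{\otimes}\Zinb$ is \emph{well defined} as a morphism of operads, i.e. that the elements $\omega\otimes\mu_\prec$, $\omega\otimes\mu_\succ$ in $\calP\underset{\mathrm{H}}{\otimes}\Zinb$ satisfy the disuccessor relations of \cite[Def.~2.19]{MR3021790}. In the quadratic setting this is automatic from $\mathrm{DSu}(\calP)=\calP\medbullet\PL$ together with the distributivity of Manin products over Hadamard products; in the non-quadratic generality stated here it is the one point that genuinely requires the "straightforward computation" the authors allude to, checking the splitting identities term by term.
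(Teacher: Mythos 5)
Your proposal is correct and follows essentially the same route as the paper: compose the morphisms $\mathrm{DSu}(\calP)\to\RB(\calP)$ and $\RB(\calP)\to\calP\underset{\mathrm{H}}{\otimes}\RB(\Com)$ from the two preceding results, then observe that on generators the composite lands in (the image of) $\calP\underset{\mathrm{H}}{\otimes}\Zinb$, the right vertical arrow being $\id_\calP$ tensored with the $\calP=\Com$ instance of the disuccessor-to-Rota--Baxter map. Your explicit flagging of the well-definedness of the top arrow is a slightly more careful rendering of what the paper dismisses as immediate from the definitions (and your Manin-product remark for quadratic $\calP$ matches the paper's own discussion following the corollary), so there is no substantive difference.
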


\begin{proof}
The map $\mathrm{DSu}(\calP)\to \calP\underset{\mathrm{H}}{\otimes}\RB(\Com)$ is the composite map 
 \[
\mathrm{DSu}(\calP)\to\RB(\calP)\to \calP\underset{\mathrm{H}}{\otimes}\RB(\Com).
 \]
Moreover, the definitions of all the maps involved immediately imply that this map factors through 
$\calP\underset{\mathrm{H}}{\otimes}\mathrm{DSu}(\Com)\cong \calP\underset{\mathrm{H}}{\otimes}\Zinb$. 
\end{proof}

Let us remark that, if $\calP$ is a quadratic operad, the map $\mathrm{DSu}(\calP)\to \calP\underset{\mathrm{H}}{\otimes}\Zinb$ can also be explained from the point of view of Manin products. For that, we recall the following general result seems to have first been noticed in \cite{MR2337188} and is a simple consequence of the Manin-like adjunction 
 \[
\Hom(\calA\medbullet\calB^!,\calC)\cong\Hom(\calA,\calB\medcirc\calC)
 \]
in the category of quadratic operads generated by binary operations.

\begin{proposition}
For quadratic operads $\calP,\calQ,\calR$ generated by binary operations, there is a canonical morphism
 \[
(\calP\medcirc\calQ)\medbullet\calR\to\calP\medcirc(\calQ\medbullet\calR)
 \]
that is the identity map on generators.
\end{proposition}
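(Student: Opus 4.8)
The plan is to obtain this morphism as a formal consequence of the stated adjunction, together with the fact, due to Vallette \cite{MR2427978}, that the white product $\medcirc$ equips the category of binary quadratic operads with a symmetric monoidal structure. Writing the adjunction $\Hom(\calA\medbullet\calB^!,\calC)\cong\Hom(\calA,\calB\medcirc\calC)$ with $\calB=\calR^!$ and using $(\calR^!)^!\cong\calR$, it asserts exactly that the endofunctor $\calR^!\medcirc(-)$ is right adjoint to $(-)\medbullet\calR$; I will write $\eta_\calX\colon\calX\to\calR^!\medcirc(\calX\medbullet\calR)$ for the unit of this adjunction, that is, the morphism corresponding to $\id_{\calX\medbullet\calR}$.

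First I would construct the morphism. Applying the adjunction with $\calA=\calP\medcirc\calQ$ and $\calC=\calP\medcirc(\calQ\medbullet\calR)$, to give a morphism $(\calP\medcirc\calQ)\medbullet\calR\to\calP\medcirc(\calQ\medbullet\calR)$ is the same as to give a morphism $\calP\medcirc\calQ\to\calR^!\medcirc\bigl(\calP\medcirc(\calQ\medbullet\calR)\bigr)$. Using the associativity and symmetry constraints of $\medcirc$, the target is canonically isomorphic to $\calP\medcirc\bigl(\calR^!\medcirc(\calQ\medbullet\calR)\bigr)$, so by functoriality of the endofunctor $\calP\medcirc(-)$ it suffices to exhibit a morphism $\calQ\to\calR^!\medcirc(\calQ\medbullet\calR)$ --- for which we take $\eta_\calQ$. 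The morphism we want is then the image of $\calP\medcirc(\eta_\calQ)$, composed with the coherence isomorphism, under the adjunction bijection.

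It remains to identify this morphism on generators. The key point is that, on spaces of generators, the adjunction $\Hom(\calA\medbullet\calB^!,\calC)\cong\Hom(\calA,\calB\medcirc\calC)$ reduces to the classical tensor--hom adjunction (with the generators of $\calB^!$ being the linear dual of those of $\calB$, twisted by the sign representation), and the coherence isomorphisms of $\medcirc$ reduce to the obvious ones for $\otimes$; in particular, on generators $\eta_\calX$ sends a generator $x$ to $\sum_i\rho_i^\vee\otimes(x\otimes\rho_i)$, where $\{\rho_i\}$ and $\{\rho_i^\vee\}$ are dual bases of the arity-two components of $\calR$ and of $\calR^!$. Chasing a generator of $(\calP\medcirc\calQ)\medbullet\calR$, whose space of generators is canonically $\calP(2)\otimes\calQ(2)\otimes\calR(2)$, through the construction --- apply $\eta_\calQ$, apply $\calP\medcirc(-)$, reassociate, and transpose back under the adjunction --- the dual bases pair off and one is left with the canonical identification $\calP(2)\otimes\calQ(2)\otimes\calR(2)\cong\calP(2)\otimes\calQ(2)\otimes\calR(2)$, i.e. the identity. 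Since a morphism of quadratic operads is determined by its restriction to generators, this pins it down. The one genuinely non-formal ingredient, and so the main point to get right, is the symmetric monoidality of $\medcirc$ from \cite{MR2427978}: this is where binarity and quadraticity are really used, since it is only there that the Manin products and the adjunction are well-behaved. A secondary bookkeeping issue is the Koszul-duality sign twist in the identification of the generators of $\calR^!$ with the dual of those of $\calR$; it appears coherently on both sides of the adjunction and cancels.
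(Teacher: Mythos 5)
Your argument is correct and is essentially the paper's own proof: both obtain the map by transposing, under the Manin adjunction $\Hom(\calA\medbullet\calB^!,\calC)\cong\Hom(\calA,\calB\medcirc\calC)$ together with the commutativity/associativity of $\medcirc$, the composite of the unit $\calQ\to\calR^!\medcirc(\calQ\medbullet\calR)$ with the counit $(\calP\medcirc\calQ)\medbullet\calP^!\to\calQ$, and your intermediate element $\calP\medcirc(\eta_\calQ)\in\Hom(\calP\medcirc\calQ,\calP\medcirc\calR^!\medcirc(\calQ\medbullet\calR))$ is precisely the one the paper passes through. Your explicit check that the result is the identity on generators is a welcome addition that the paper's proof leaves implicit.
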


\begin{proof}
Indeed, the Manin-like adjunction gives us 
\begin{gather*}
\Hom((\calP\medcirc\calQ)\medbullet\calP^!,\calQ)\cong\Hom(\calP\medcirc\calQ,\calP\medcirc\calQ),\\
\Hom(\calQ,\calR^!\medcirc(\calQ\medbullet\calR))\cong\Hom(\calQ\medbullet\calR,\calQ\medbullet\calR),
\end{gather*}
so there are canonical maps in $\Hom((\calP\medcirc\calQ)\medbullet\calP^!,\calQ)$ and in 
$\Hom(\calQ,\calR^!\medcirc(\calQ\medbullet\calR))$ corresponding to the identity maps. Composing these two maps, we get a canonical 
map in
 \[
\Hom((\calP\medcirc\calQ)\medbullet\calP^!,\calR^!\medcirc(\calQ\medbullet\calR)).
 \]
Additionally, we have
\begin{multline*}
\Hom((\calP\medcirc\calQ)\medbullet\calR,\calP\medcirc(\calQ\medbullet\calR))\cong\\
\Hom((\calP\medcirc\calQ),\calR^!\medcirc\calP\medcirc(\calQ\medbullet\calR))\cong
\Hom((\calP\medcirc\calQ),\calP\medcirc\calR^!\medcirc(\calQ\medbullet\calR))\cong\\
\Hom((\calP\medcirc\calQ)\medbullet\calP^!,\calR^!\medcirc(\calQ\medbullet\calR)),
\end{multline*}
so there is a canonical map in $\Hom((\calP\medcirc\calQ)\medbullet\calR,\calP\medcirc(\calQ\medbullet\calR))$, as required.
\end{proof}

If we apply this result to $\calQ=\Com$, $\calR=\PL$, we get a map 
 \[
(\calP\medcirc\Com)\medbullet\PL\to\calP\medcirc(\Com\medbullet\PL),
 \]
which is simply a map 
 $
\mathrm{DSu}(\calP)\to\calP\medcirc\Zinb.
 $
Recalling that there is a canonical map from the Manin white product maps onto the suboperad of the Hadamard product generated by the binary operations, we obtain the desired map
 \[
\mathrm{DSu}(\calP)\to \calP\underset{\mathrm{H}}{\otimes}\Zinb. 
 \]

An important consequence of Corollary \ref{cor:RBC-P} is the following conceptual explanation of the following general version of the construction of Foissy.

\begin{definition}
Let $A$ be a $\calP$-algebra with a strictly positive weight grading. Let us define, for each generator $\omega$ of $\calP$, the operations 
$\omega_\prec$ and $\omega_\succ$ on $A$ by putting
 \[
\omega_\prec(a_1,a_2)=\frac{|a_1|}{|a_1|+|a_2|}\omega(a_1,a_2),\\
\omega_\succ(a_1,a_2)=\frac{|a_2|}{|a_1|+|a_2|}\omega(a_1,a_2).
 \]
We denote by $\calF(A)$ the vector space $A$ equipped with these operations. 
\end{definition}

The construction $\calF(A)$ is clearly functorial in $A$. We shall refer to it as the \emph{Foissy functor}.

\begin{proposition}
The Foissy functor is a functor from the category of $\calP$-algebras to the category of $\mathrm{DSu}(\calP)$-algebras. 
\end{proposition}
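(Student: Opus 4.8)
The plan is to realize $\calF(A)$ as the $\mathrm{DSu}(\calP)$-algebra produced by the Rota--Baxter machinery recalled above, composed with the grading-operator correction of Al-Kaabi--Manchon--Patras. So I would first build a weight-zero Rota--Baxter operator $R$ on $A$ out of the weight grading, feed it into the morphism $\mathrm{DSu}(\calP)\to\RB(\calP)$ of \cite[Th.~4.4]{MR3021790}, and then conjugate the result by the grading operator, reversing the passage from $x\triangleleft y$ to $x\blacktriangleleft y$ recalled in Section~\ref{sec:reformulate}.

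For the first step, set $R(a):=\tfrac{1}{|a|}a$ on a homogeneous element $a$; this is well defined because the grading is strictly positive. As each generator $\omega$ of $\calP$ maps $A_m\otimes A_n$ into $A_{m+n}$, for homogeneous $a_1,a_2$ both $\omega(R(a_1),R(a_2))$ and $R\bigl(\omega(R(a_1),a_2)+\omega(a_1,R(a_2))\bigr)$ are readily computed to equal $\tfrac{1}{|a_1|\,|a_2|}\omega(a_1,a_2)$, so $R$ is a Rota--Baxter operator of weight zero on $A$. This in fact fits into the picture of Corollary~\ref{cor:RBC-P}: $R$ is the restriction of $\id\otimes R_W$ to the weight-diagonal $\calP$-subalgebra $\bigoplus_{n\geq 1}A_n\otimes\k t^n$ of $A\otimes W$, where $W=t\k[t]$, with product $t^m t^n=t^{m+n}$ and $R_W(t^n)=\tfrac1n t^n$, is a $\RB(\Com)$-algebra. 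By the construction recalled right after \cite[Th.~4.4]{MR3021790}, the operations $\omega_\prec(a_1,a_2)=\omega(a_1,R(a_2))=\tfrac{1}{|a_2|}\omega(a_1,a_2)$ and $\omega_\succ(a_1,a_2)=\omega(R(a_1),a_2)=\tfrac{1}{|a_1|}\omega(a_1,a_2)$, for all generators $\omega$, equip $A$ with a $\mathrm{DSu}(\calP)$-algebra structure.

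It remains to transport this structure along the (invertible) grading operator $N\colon A\to A$, $N(a)=|a|\,a$. Since being a $\mathrm{DSu}(\calP)$-algebra is preserved under linear isomorphisms, the operations $N^{-1}\bigl(\omega_\prec(Na_1,Na_2)\bigr)$ and $N^{-1}\bigl(\omega_\succ(Na_1,Na_2)\bigr)$ again define a $\mathrm{DSu}(\calP)$-algebra on $A$, and a one-line computation --- the inverse of the passage from $\triangleleft$ to $\blacktriangleleft$ in Section~\ref{sec:reformulate} --- identifies them with $\tfrac{|a_1|}{|a_1|+|a_2|}\omega(a_1,a_2)$ and $\tfrac{|a_2|}{|a_1|+|a_2|}\omega(a_1,a_2)$, i.e. with the operations defining $\calF(A)$; hence $\calF(A)$ is a $\mathrm{DSu}(\calP)$-algebra. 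Functoriality (already asserted above) is then automatic, since a weight-preserving morphism of $\calP$-algebras commutes with both $R$ and $N$. I expect no real obstacle: everything collapses to the two displayed identities, and the only point requiring attention is keeping the $\prec/\succ$ labels straight through the conjugation by $N$.
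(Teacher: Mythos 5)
Your proof is correct and follows essentially the same route as the paper's: produce the intermediate operations $\omega_\prec(a_1,a_2)=\tfrac{1}{|a_2|}\omega(a_1,a_2)$, $\omega_\succ(a_1,a_2)=\tfrac{1}{|a_1|}\omega(a_1,a_2)$ from a Rota--Baxter operator via \cite[Th.~4.4]{MR3021790}, then conjugate by the grading operator as in \cite{MR3818652}. The only cosmetic difference is that you place the Rota--Baxter operator $R(a)=\tfrac{1}{|a|}a$ directly on $A$ and verify the Rota--Baxter identity by hand, whereas the paper realizes the same structure inside $A\otimes\k[z]$ with $R=\int_0^z$ via Corollary~\ref{cor:RBC-P} and restricts to the subspace $\bigoplus_{i>0}A_iz^{i-1}$.
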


\begin{proof}
Let us consider the $\RB(\Com)$-algebra $\k[z]$ equipped with the obvious commutative algebra structure and the Rota--Baxter operation $R(f):=\int_0^z f(z)\,dz$. Then for a $\calP$-algebra $A$ with a strictly positive weight grading, we can consider the subspace
 \[
\bigoplus_{i>0} A_iz^{i-1}\subset A[z],  
 \]
which, as a vector space, is isomorphic to $A$, is closed under the canonical $\mathrm{DSu}(\calP)$-algebra structure on $A[z]=A\otimes\k[z]$ of Corollary \ref{cor:RBC-P}. This structure is explicitly given by 
 \[
\omega_\prec(a_1,a_2)=\frac{1}{|a_2|}\omega(a_1,a_2),\\
\omega_\succ(a_1,a_2)=\frac{1}{|a_1|}\omega(a_1,a_2),
 \]
and using the same argument as in \cite{MR3818652} (that is, conjugating by the grading operator) furnishes an isomorphism with the structure operations of the Foissy functor.
\end{proof}

\subsection{The operad of rational functions}\label{sec:ratfct}

Recall that the operad $\RF$ of Loday \cite{MR2676956} is defined as follows. We have
 \[
\RF(n)=\k(x_1,\ldots,x_n)
 \]
and the composition rule defined, for $f\in \RF(m)$, $g\in\RF(n)$, and $1\le i\le m$, by
\begin{multline*}
(f\circ_i g)(x_1,\ldots,x_{m+n-1})=\\
f(x_1,\ldots,x_{i-1},x_i+\cdots+x_{i+n-1},x_{i+n},\ldots,x_{m+n-1})g(x_i,\ldots,x_{i+n-1})
\end{multline*}
This makes $\RF$ a nonsymmetric operad; in fact, the composition rule is compatible with the action of symmetric groups, and so one may also view this operad as a symmetric operad. The binary operation $\mu:=1\in\RF(2)$ is immediately seen to generate a suboperad isomorphic to $\Com$. We shall use this latter operation to define an associative algebra structure on $\prod_{n\ge 0}\RF(n)$ as the convolution product on
 \[
\prod_{n\ge 0}\RF(n)\cong\prod_{n\ge 0}\Hom(\As^*(n),\RF(n)).
 \]
In this convolution algebra, the exponential of each element that does not have arity zero component is well defined, as well as the logarithm of elements that have $1$ as the arity zero component. Moreover, it turns out that sending the Rota--Baxter operator to $\frac{1}{x_1}\in\RF(1)$ extends the inclusion $\Com\hookrightarrow\RF$ to an injective map of operads $\RB(\Com)\to \RF$; we present a proof of this fact in Appendix \ref{app:RB}. In particular, the discussion of Rota--Baxter algebras and disuccessors in the previous section immediately implies that the binary operation 
 \[
\nu:=\mu\circ_1R=\frac{1}{x_1}\in \RF(2)
 \] 
satisfies the Zinbiel identity. This result is also an immediate consequence of the corresponding result for the operad $\Mou$ \cite{MR2363304,MR2429249}, since the latter is isomorphic to the operad $\RF$. However, in the context of Rota--Baxter algebras, it is perhaps more practical to use the operad $\RF$, since within that operad the unary operation $x_1$ is a derivation of $\mu$, and so the unary operation $\frac{1}{x_1}$ corresponds to computing anti-derivatives, which matches well Rota's intuition of viewing the Rota--Baxter identity as an algebraic formalization of integration by parts.

\section{Explicit formula for the gauge trivialization}

Now that we offered the reader a suitable recollection of disuccessors and Rota--Baxter operators, we shall use those notions to analyze Equation \eqref{eq:DifferentialOnf-i}, that is
 \[
[f_n,\delta+\alpha] = 
\lambda_n - \sum_{\substack{i+j = n \\ i,j\geq 1}} \frac{1}{j}[f_i,\lambda_j]
 \]
and to deduce from it Property \eqref{eq:GaugeMain}, that is to find an element $\xi$ for which 
 \[
\exp(\ad_\xi)(\delta+\alpha) = \delta+\lambda.\alpha = \sum_{n\ge 1}\frac1n\lambda_n. 
 \]
There are two possible strategies that we are going present. Our first calculation will happen on the level of the Lie algebra of gauge symmetries, and will mainly use calculations in the operad of rational functions (in fact, in its Rota--Baxter suboperad). Our second calculation will happen on the level of the group of gauge symmetries, and will mainly use calculations in free dendriform algebras. We shall conclude by explaining that the seemingly different formulas produced by the two methods actually coincide. 

\subsection{An explicit formula for the gauge symmetry on the Lie algebra level}\label{sec:euler}

Let us explain how to find an element $\xi$ having Property \eqref{eq:GaugeMain}. We shall search for such an element $\xi$ of the form 
\begin{equation}\label{eq:form-xi}
  \xi = \xi_1  + \sum_{n=2}^\infty \xi_n :=
   \sum_{i_1=1}^\infty G_1(i_1) f_{i_1} +  \sum_{n=2}^\infty \sum_{i_1,\dots,i_n \geq 1}
  G_n(i_1,\dots,i_n) \, \ad_{f_{i_n}} \ad_{f_{i_{n-1}}}\cdots \ad_{f_{i_2}} (f_{i_1})\,,
\end{equation}
where $G_n(x_1,\ldots,x_n)$, $n\geq 1$, are unknown rational functions of their arguments.
Let us rewrite Equation \eqref{eq:DifferentialOnf-i} as 
 \[
\frac1n[f_n,\delta+\alpha] = 
\frac1n\lambda_n - \sum_{\substack{i+j = n \\ i,j\geq 1}} \frac{1}{j(i+j)}[f_i,\lambda_j] .
 \]
Applying to this $\frac1{m+n}[f_m,-]$ and summing over all $m$ with constant $m+n$, we obtain 
\begin{multline*}
\sum_{i_1+i_2=n}\frac{1}{i_1(i_1+i_2)}[f_{i_2},[f_{i_1},\delta+\alpha]] = \\
\sum_{i_1+i_2=n}\frac{1}{i_1(i_1+i_2)}[f_{i_2},\lambda_{i_1}] - \sum_{i_1+i_2=n}\sum_{\substack{i+j = i_1 \\ i,j\geq 1}} \frac{1}{j(i+j)(i+j+i_2)}[f_{i_2},[f_i,\lambda_j]]=\\
\sum_{i_1+i_2=n}\frac{1}{i_1(i_1+i_2)}[f_{i_2},\lambda_{i_1}] - \sum_{j_0+j_1+j_2=n} \frac{1}{j_0(j_0+j_1)(j_0+j_1+j_2)}[f_{j_2},[f_{j_1},\lambda_{j_0}]]
 \,,
\end{multline*}
which similarly implies
\begin{multline*}
\sum_{i_1+i_2+i_3=n}\frac{1}{i_1(i_1+i_2)(i_1+i_2+i_3)}[f_{i_3},[f_{i_2},[f_{i_1},\delta+\alpha]]] = \\
\sum_{i_1+i_2+i_3=n}\frac{1}{i_1(i_1+i_2)(i_1+i_2+i_3)}[f_{i_3},[f_{i_2},\lambda_{i_1}]] - \\ \sum_{j_0+j_1+j_2+j_3=n} \frac{1}{j_0(j_0+j_1)(j_0+j_1+j_2)(j_0+j_1+j_2+j_3)}[f_{j_3},[f_{j_2},[f_{j_1},\lambda_{j_0}]]]
 \,,
\end{multline*}
and, more generally,
\begin{multline}\label{eq:telescope}
\sum_{i_1+\cdots+i_k=n}\prod\limits_{j=1}^k\frac{1}{i_{1}+\cdots+i_{j}}\ad_{f_{i_k}}\cdots\ad_{f_{i_1}}(\delta+\alpha) = \\
\sum_{i_1+\cdots+i_k=n}\prod\limits_{j=1}^k\frac{1}{i_{1}+\cdots+i_{j}}\ad_{f_{i_k}}\cdots\ad_{f_{i_2}}(\lambda_{i_1}) - 
\sum_{j_0+\cdots+j_k=n}\prod\limits_{j=0}^k\frac{1}{i_{0}+\cdots+i_{j}}\ad_{f_{i_k}}\cdots\ad_{f_{i_1}}(\lambda_{i_0})
 \,.
\end{multline}
Our computation shows that if the degree $k$ part of $\exp(\ad_{\xi})(\delta+\alpha)$ is given by 
 \[
\sum_{i_1+\cdots+i_k=n}\prod\limits_{j=1}^k\frac{1}{i_{1}+\cdots+i_{j}}\ad_{f_{i_k}}\cdots\ad_{f_{i_1}}(\delta+\alpha)
 \]
for all $k$, then we have a telescoping series made of Equations \eqref{eq:telescope} which adds up to the required 
 \[
\exp(\ad_{\xi})(\delta+\alpha) = \delta+\lambda.\alpha .
 \]
To proceed, we would like to express $\ad_{\xi}$ via the operators $\ad_{f_j}$. To do so, we introduce the set $I_n\subset S_n$ of $V$-shaped permutations, that is permutations $\sigma$ such that 
  \[
\sigma(1) > \sigma (2) >\cdots >\sigma(\sigma^{-1}(1) -1) > \sigma(\sigma^{-1}(1) )=1 < \sigma(\sigma^{-1}(1) +1) < \cdots <\sigma(n)  \,.
  \]

\begin{lemma}\label{lm:preparation-xi-exp}
For an arbitrary $x\in \mathfrak{b}_{-1}$, we have 
  \[
[\xi_n, x] =\sum_{i_1,\dots,i_n \geq 1}
  F_n(i_1,\dots,i_n)\,  \ad_{f_{i_n}} \ad_{f_{i_{n-1}}}\cdots \ad_{f_{i_1}} (x)\,,
  \]
where
\begin{equation} \label{eq:Definition-F-G}
    F_n(i_1,\dots,i_n) \coloneqq \sum_{\sigma\in I_n} (-1)^{\sigma^{-1}(1)-1} G_n \big(i_{\sigma^{-1}(1)},\dots,i_{\sigma^{-1}(n)}\big)
\end{equation}
and if $m\geq 1$, $n_1,\dots,n_m\geq 1$, $\sum_{i=1}^m n_m = n$, we have
  \[
  \ad_{\xi_{n_m}} \cdots \ad_{\xi_{n_1}} (x) =\sum_{i_1,\dots,i_n \geq 1}
  \left[\prod_{j=1}^m F_{n_m} (i_{n_1+\cdots+n_{m-1}+1}, \dots, i_{n_1+\cdots+n_{m}})\right] \ad_{f_{i_n}} \ad_{f_{i_{n-1}}}\cdots \ad_{f_{i_1}} (x)\,.
  \]
\end{lemma}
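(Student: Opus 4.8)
The plan is to reduce both formulas to one combinatorial identity about iterated brackets and then bootstrap. The identity I would prove first is that, in any Lie algebra, for elements $b_1,\dots,b_n$ and $x$,
\[
\bigl[\ad_{b_n}\ad_{b_{n-1}}\cdots\ad_{b_2}(b_1),x\bigr]=\sum_{\sigma\in I_n}(-1)^{\sigma^{-1}(1)-1}\,\ad_{b_{\sigma(n)}}\ad_{b_{\sigma(n-1)}}\cdots\ad_{b_{\sigma(1)}}(x).
\]
By the Jacobi identity, $\ad$ sends the right-normed bracket $\ad_{b_n}\cdots\ad_{b_2}(b_1)$ to the iterated operator commutator $[\ad_{b_n},[\ad_{b_{n-1}},[\cdots,[\ad_{b_2},\ad_{b_1}]\cdots]]]$, so it suffices to prove, in an arbitrary associative algebra, that
\[
[X_n,[X_{n-1},[\cdots,[X_2,X_1]\cdots]]]=\sum_{\sigma\in I_n}(-1)^{\sigma^{-1}(1)-1}\,X_{\sigma(n)}X_{\sigma(n-1)}\cdots X_{\sigma(1)},
\]
and then to put $X_k=\ad_{b_k}$ and evaluate at $x$. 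I would prove the associative identity by induction on $n$ (the case $n=1$ being trivial): expanding the outermost commutator $[X_n,W]=X_nW-WX_n$ with $W=[X_{n-1},[\cdots,[X_2,X_1]\cdots]]$ and inserting the inductive expansion of $W$ produces monomials of two shapes, $X_n\cdot(\text{word in }X_1,\dots,X_{n-1})$ and $(\text{word in }X_1,\dots,X_{n-1})\cdot X_n$. The crux is the observation that, for $n\ge 2$, a permutation of $\{1,\dots,n\}$ is $V$-shaped exactly when its maximal value $n$ occupies the first or the last position and deleting that value yields a $V$-shaped permutation of $\{1,\dots,n-1\}$; thus the two families of monomials biject with $\{\sigma\in I_n:\sigma(n)=n\}$ and $\{\sigma\in I_n:\sigma(1)=n\}$ and together exhaust $I_n$ without repetition. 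Deleting $n$ from the last position leaves $\sigma^{-1}(1)$ unchanged, while deleting it from the first position decreases $\sigma^{-1}(1)$ by $1$; this flip of parity is exactly compensated by the minus sign carried by the second family, so the signs match $(-1)^{\sigma^{-1}(1)-1}$.

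Granting this, the first formula of the lemma is a change of variables. Writing $\xi_n=\sum_{j_1,\dots,j_n\ge 1}G_n(j_1,\dots,j_n)\,\ad_{f_{j_n}}\cdots\ad_{f_{j_2}}(f_{j_1})$ (with no $\ad$'s when $n=1$) and applying the identity with $b_k=f_{j_k}$, I would, for each fixed $\sigma\in I_n$, substitute $i_k:=j_{\sigma(k)}$; then the word of adjoint operators becomes $\ad_{f_{i_n}}\cdots\ad_{f_{i_1}}(x)$ and the coefficient becomes $(-1)^{\sigma^{-1}(1)-1}G_n(i_{\sigma^{-1}(1)},\dots,i_{\sigma^{-1}(n)})$, so summation over $\sigma\in I_n$ reconstitutes precisely $F_n(i_1,\dots,i_n)$ as defined in \eqref{eq:Definition-F-G}. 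I would note here that, $f_i$ having weight $i\ge 1$, only finitely many monomials contribute to any fixed weight, so these series live in the weight-completed Lie algebra and the reindexing is legitimate.

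For the second formula I would induct on $m$, the base case $m=1$ being the first formula. Because each $f_i$ is the inverse suspension of the arity-zero, degree-one element $\widetilde{f_i}$, it lies in degree $0$, so every $\ad_{f_i}$ preserves homological degree and each monomial $\ad_{f_{i_k}}\cdots\ad_{f_{i_1}}(x)$ is again of degree $-1$; hence the first formula may be applied to it. Setting $N_j:=n_1+\cdots+n_j$, I would apply $\ad_{\xi_{n_m}}$ term by term to the expansion of $\ad_{\xi_{n_{m-1}}}\cdots\ad_{\xi_{n_1}}(x)$, using fresh summation variables $i_{N_{m-1}+1},\dots,i_{N_m}$; this appends a new block of $n_m$ adjoint operators on the left and multiplies each coefficient by $F_{n_m}(i_{N_{m-1}+1},\dots,i_{N_m})$, producing the stated product $\prod_{j=1}^m F_{n_j}(i_{N_{j-1}+1},\dots,i_{N_j})$. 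The only genuine difficulty is the first step: pinning down which monomials occur in a right-normed iterated commutator, recognizing the index set as the $V$-shaped permutations, and keeping the signs straight; once this is in place, the rest is reindexing and a routine induction.
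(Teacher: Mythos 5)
Your proposal is correct and takes essentially the same route as the paper: both establish the expansion of $[\ad_{f_{i_n}}\cdots\ad_{f_{i_2}}(f_{i_1}),x]$ over the $V$-shaped permutations $I_n$ by induction on $n$ via the Jacobi identity (using that the $f_i$ have degree zero), and then deduce both formulas by reindexing and iterating. You merely make explicit the combinatorial bookkeeping the paper leaves implicit --- the bijection between $I_n$ and the two families of monomials obtained by placing $n$ at the first or last position --- and you silently correct the paper's typo $F_{n_m}\mapsto F_{n_j}$ in the product formula.
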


\begin{proof} 
First, we note that we have
  \[
[\ad_{f_{i_n}} \ad_{f_{i_{n-1}}}\cdots \ad_{f_{i_2}} (f_{i_1}), x] = 
  \sum_{\sigma\in I_n}(-1)^{\sigma^{-1}(1)-1}\ad_{f_{i_\sigma(n)}} \ad_{f_{i_{\sigma(n-1)}}}\cdots \ad_{f_{i_{\sigma(1)}}} (x) \,.
  \]
This is established by induction on $n$. The basis of induction $[f_{i_1},x]=[f_{i_1},x]$ is trivial. Since all $f_i$ are of degree zero, the element $\ad_{f_{i_{k}}}\cdots \ad_{f_{i_2}} (f_{i_1})$ is also of degree zero, and the Jacobi identity implies 
\begin{multline*}
[\ad_{f_{i_n}} \ad_{f_{i_{n-1}}}\cdots \ad_{f_{i_2}} (f_{i_1}),x]=\\
[f_{i_n},[\ad_{f_{i_{n-1}}}\cdots \ad_{f_{i_2}} (f_{i_1}),x]]-
[\ad_{f_{i_{n-1}}}\cdots \ad_{f_{i_2}} (f_{i_1}),[\ad_{f_{i_n}},x]],
\end{multline*} 
and the inductive hypothesis easily completes the proof. The main statement follows automatically from the definition of $\xi_n$.
\end{proof}

Using Lemma~\ref{lm:preparation-xi-exp}, we see that we wish to find rational functions $G_i$, $i\geq 1$, such that the rational functions $F_i$, $i\geq 1$, defined in terms of $G_i$'s in Equation~\eqref{eq:Definition-F-G} satisfy the equations
 \[
\sum_{n_1+\cdots+n_k=n}\prod_{m=1}^k F_{n_m} (i_{n_1+\cdots+n_{m-1}+1}, \dots, i_{n_1+\cdots+n_{m}})=\prod\limits_{j=1}^n\frac{1}{i_{1}+\cdots+i_{j}}.
 \]

Using the convolution algebra constructed out of the operad $\RF$, we can rewrite these equations as
\begin{equation}\label{eq:Exp-F-E}
  \exp\Big[\sum_{j=1}^\infty F_j \Big] (i_1,i_2,\dots) 
  = 1 + \sum_{n=1}^\infty \prod\limits_{j=1}^n\frac{1}{i_{1}+\cdots+i_{j}}. \,,
\end{equation}
or, equivalently, as
\begin{equation}\label{eq:Log-E-F}
  \Big[\sum_{j=1}^\infty F_j \Big] (i_1,i_2,\dots) = \log \left[1+ \sum_{n=1}^\infty \prod\limits_{j=1}^n\frac{1}{i_{1}+\cdots+i_{j}}.\right]\,.
\end{equation}

Recall that a \emph{descent} of a permutation $\rho\in S_n$ is a pair $(i,i+1)$ with $\rho(i) > \rho(i+1)$. We denote by $\cd(\rho)$) the number of  descents of $\rho$.

\begin{proposition}\label{prop:solomon}
We have
 \[
F_n(i_1,\dots,i_n) = \frac1n \sum_{\rho\in S_n} \frac{(-1)^{\cd(\rho^{-1})}}{
  \binom{n-1}{ \cd(\rho^{-1}) }
} 
\prod_{j=1}^{n} \frac {1}{i_{\rho(1)}+\cdots+i_{\rho(j)}}. 
 \]
\end{proposition}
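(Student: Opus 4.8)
The plan is to compute the right-hand side of \eqref{eq:Log-E-F} inside the convolution algebra of $\RF$ -- write $\ast$ for its product -- and, for each arity $n$, to match the resulting rational function against the claimed formula by comparing coefficients in the family of \emph{simplex functions} $\prod_{j=1}^{n}\tfrac{1}{x_{\rho(1)}+\cdots+x_{\rho(j)}}$, $\rho\in S_n$. Set $e_n(x_1,\dots,x_n):=\prod_{j=1}^{n}\tfrac1{x_1+\cdots+x_j}$, so that the series inside the logarithm in \eqref{eq:Log-E-F} is $E:=1+\sum_{n\ge1}e_n$; note that $e_n$ is the simplex function attached to the identity permutation of $\{1,\dots,n\}$, and that $\sum_{j\ge1}F_j=\log E$ has arity $n$ component $F_n$, so it suffices to compute the arity $n$ component of $\log E$.

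The input that makes $\ast$ tractable is the \emph{shuffle identity}: for every composition $(n_1,\dots,n_k)$ of $n$,
\begin{equation*}
e_{n_1}(x_1,\dots,x_{n_1})\,e_{n_2}(x_{n_1+1},\dots,x_{n_1+n_2})\cdots e_{n_k}(\dots,x_n)=\sum_{\rho}\;\prod_{j=1}^{n}\frac{1}{x_{\rho(1)}+\cdots+x_{\rho(j)}},
\end{equation*}
the sum running over all $\rho\in S_n$ whose inverse is increasing on each block $\{n_1+\cdots+n_{m-1}+1,\dots,n_1+\cdots+n_m\}$. This is precisely the assertion that the Rota--Baxter operator $\tfrac1{x_1}$ of Appendix~\ref{app:RB} makes $\RF$ into a shuffle algebra (equivalently, it is the iterated-integral shuffle identity). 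I would prove the two-block case by induction on $n$ using partial fractions, the base case being $e_1(x_1)e_1(x_2)=\tfrac1{x_1+x_2}\big(\tfrac1{x_1}+\tfrac1{x_2}\big)=\tfrac1{x_1x_2}=e_2(x_1,x_2)+e_2(x_2,x_1)$, and then obtain the general case by iterating, using associativity of $\ast$; alternatively one can deduce it from Appendix~\ref{app:RB} together with the classical Cartier--Rota description of free commutative Rota--Baxter algebras.

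Given the shuffle identity, expand $\log E=\sum_{k\ge1}\tfrac{(-1)^{k-1}}{k}\big(\sum_{n\ge1}e_n\big)^{\ast k}$. Its arity $n$ component is the sum over $k$ and over compositions $(n_1,\dots,n_k)$ of $n$ of $\tfrac{(-1)^{k-1}}{k}$ times the corresponding product of $e$'s; substituting the shuffle identity and collecting terms, the coefficient of $\prod_{j=1}^{n}\tfrac1{x_{\rho(1)}+\cdots+x_{\rho(j)}}$ equals $\sum_{k}\tfrac{(-1)^{k-1}}{k}N_k(\rho)$, where $N_k(\rho)$ is the number of compositions of $n$ into $k$ parts whose set of $k-1$ cut points contains the descent set of $\rho^{-1}$ -- a composition produces this monomial exactly when $\rho^{-1}$ is increasing on each of its blocks. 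Writing $d:=\cd(\rho^{-1})$ we get $N_k(\rho)=\binom{n-1-d}{k-1-d}$, so the coefficient is $\sum_{k=d+1}^{n}\tfrac{(-1)^{k-1}}{k}\binom{n-1-d}{k-1-d}$. The $n!$ simplex functions are linearly independent -- their reduced polar divisors are pairwise distinct, since sorting the occurring hyperplanes by the sizes of their supports recovers the flag $\{\rho(1)\}\subset\{\rho(1),\rho(2)\}\subset\cdots$, and independence then follows by a standard residue induction -- so it remains only to prove the scalar identity
\begin{equation*}
\sum_{k=d+1}^{n}\frac{(-1)^{k-1}}{k}\binom{n-1-d}{k-1-d}=\frac{(-1)^{d}}{n\binom{n-1}{d}}.
\end{equation*}
Putting $k=d+1+j$ and $m:=n-1-d$ turns the left-hand side into $(-1)^{d}\sum_{j=0}^{m}\tfrac{(-1)^j}{j+d+1}\binom mj=(-1)^{d}\int_0^1 t^{d}(1-t)^{m}\,dt=(-1)^{d}B(d+1,m+1)=\tfrac{(-1)^{d}}{n\binom{n-1}{d}}$, which is $\tfrac1n$ times the coefficient appearing in the statement, and the proof is complete.

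The step that genuinely requires care is the shuffle identity for the $e_n$; everything after it is bookkeeping plus a Beta integral. I would remark that the whole computation is the mould-calculus incarnation of the classical theorem, going back to Solomon, that the logarithm of the identity grouplike element of the descent algebra is the first Eulerian idempotent $\tfrac1n\sum_{\sigma\in S_n}\tfrac{(-1)^{\cd(\sigma)}}{\binom{n-1}{\cd(\sigma)}}\sigma$; the fact that $\rho^{-1}$ rather than $\rho$ enters the formula is forced by the convention built into $\ast$, which concatenates \emph{consecutive} blocks of variables.
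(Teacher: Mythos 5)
Your proof is correct, and while it rests on the same structural fact as the paper's --- that the simplex functions $\prod_{j=1}^{n}\frac{1}{x_{\rho(1)}+\cdots+x_{\rho(j)}}$ realize a shuffle algebra inside the convolution algebra of $\RF$, so that the logarithm in \eqref{eq:Log-E-F} is governed by the first Eulerian idempotent --- the execution is genuinely different. The paper conjugates by the Rota--Baxter operator to land in the Zinbiel picture, identifies the relevant convolution (shuffle product against deconcatenation coproduct) as the dual of the descent-algebra convolution (concatenation against shuffle coproduct), and then \emph{cites} the classical formula $\cE=\frac1n\sum_\sigma\frac{(-1)^{\cd(\sigma)}}{\binom{n-1}{\cd(\sigma)}}\sigma$ for the logarithm of the identity, the $\rho^{-1}$ in the statement being produced by the dualization. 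You instead stay in $\RF$ and compute the logarithm from scratch: the shuffle identity converts each convolution power into a sum of simplex functions indexed by permutations whose inverse is increasing on the blocks, the coefficient of a fixed simplex function becomes a sum over compositions whose cut-point set contains the descent set of $\rho^{-1}$, and the Beta integral evaluates that sum to $\frac{(-1)^{\cd(\rho^{-1})}}{n\binom{n-1}{\cd(\rho^{-1})}}$. In effect you reprove the Eulerian-idempotent formula rather than quoting it, which makes the argument self-contained; you also make explicit two points the paper leaves implicit, namely the shuffle identity itself (which, as you note, follows by induction from the Rota--Baxter identity of Appendix \ref{app:RB}) and the linear independence of the $n!$ simplex functions, which is needed to legitimize the coefficient comparison. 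The only cost is that the conceptual link to the descent algebra, which the paper exploits again in Lemma \ref{lem:Log-General}, is relegated in your write-up to a closing remark.
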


\begin{proof}
We shall instead prove the identity
 \[
\log \left[1+ \sum_{n=1}^\infty \prod\limits_{j=1}^{n-1}\frac{1}{i_{1}+\cdots+i_{j}}.\right]=\sum_{\rho\in S_n} \frac 1n \frac{(-1)^{\cd(\rho^{-1})}}{\binom{n-1}{\cd(\rho^{-1})}} 
\prod_{j=1}^{n-1} \frac {1}{i_{\rho(1)}+\cdots+i_{\rho(j)}}
 \]
which is related to the identity we wish by a conjugation of the commutative product using the Rota--Baxter operator $R$. That conjugation turns the commutative product with respect we compute the logarithm into a commutative product admitting a Zinbiel splitting, which is conceptually important for us. Indeed, the elements 
 \[
\prod_{j=1}^{n-1} \frac {1}{i_{\rho(1)}+\cdots+i_{\rho(j)}}
 \] 
form a basis of the component $\Zinb(n)$ of the Zinbiel operad; in fact, the correspondence is simply
 \[
\prod_{j=1}^{n-1} \frac {1}{i_{\rho(1)}+\cdots+i_{\rho(j)}}\leftrightarrow \nu(\ldots\nu(\nu(a_{\rho(1)},a_{\rho(2)}),\ldots),a_{\rho(n)})
 \]
The $n$-th component of the Zinbiel operad can also be viewed as the multilinear part of the free Zinbiel algebra on $n$ generators, which is the reduced tensor space 
 \[
\overline{T}(V):=\bigoplus_{n\ge 1}V^{\otimes n}. 
 \]
The Zinbiel product is ``one-half'' of the shuffle product on that space; we may also use the deconcatenation coproduct that makes it into a coalgebra. The convolution product between the shuffle product and the deconcatenation coproduct is the dual to the more commonly used convolution between the concatenation product and the shuffle coproduct. For that latter, the logarithm of the identity is precisely the first Eulerian idempotent   
  \[
  \cE\coloneqq \frac1n\sum_{\sigma\in S_n} \frac{(-1)^{\cd(\rho)}}{\binom{n-1}{\cd(\rho)}}  \sigma, 
  \]
see, e.g. \cite{MR1274784}. It remains to note that dualizing amounts, from the point of view of the symmetric group actions (and this is the only thing we need here), to replacing $\sigma$ by $\sigma^{-1}$, and the result follows. 
\end{proof}

\begin{remark}
A different situation where considering the shuffle \emph{product} and the deconcatenation \emph{coproduct} has meaning can be found in the recent work of Foissy and Patras \cite{foissy2024commutativebinftyalgebrasshuffle}. 
\end{remark}

\begin{lemma} \label{lem:Log-General} For
  \[
  G_n(i_1,\dots,i_n)\coloneqq \frac1n\sum_{\substack{\rho\in S_n\\ \rho(1)=1}}  \frac{(-1)^{\cd(\rho^{-1})}}{\binom{n-1}{\cd(\rho^{-1})}} 
\prod_{j=1}^{n} \frac {1}{i_{\rho(1)}+\cdots+i_{\rho(j)}}
  \]
  we have the identity
  \[
  F_n(i_1,\dots,i_n) \coloneqq \sum_{\sigma\in I_n} (-1)^{\sigma^{-1}(1)-1} G_n \big(i_{\sigma^{-1}(1)},\dots,i_{\sigma^{-1}(n)}\big).
  \]
\end{lemma}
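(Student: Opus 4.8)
The lemma asserts that substituting the displayed $G_n$ into \eqref{eq:Definition-F-G} reproduces the closed formula for $F_n$ from \Cref{prop:solomon}, and this is how I would prove it. First I would note that both the expression $\sum_{\sigma\in I_n}(-1)^{\sigma^{-1}(1)-1}G_n(i_{\sigma^{-1}(1)},\dots,i_{\sigma^{-1}(n)})$ and the right-hand side of \Cref{prop:solomon} are scalar combinations of the rational functions $\prod_{j=1}^{n}\frac{1}{i_{\rho(1)}+\cdots+i_{\rho(j)}}$, $\rho\in S_n$: for the second this is the definition, and for the first it holds because the substitution $i_k\mapsto i_{\sigma^{-1}(k)}$ only permutes the variables and, in particular, fixes the last factor $\frac{1}{i_1+\cdots+i_n}$. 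Dividing out that common ``Rota--Baxter tail'' and using the linear independence of the $\Zinb(n)$-basis $\bigl\{\prod_{j=1}^{n-1}\frac{1}{i_{\rho(1)}+\cdots+i_{\rho(j)}}\bigr\}_{\rho\in S_n}$ recalled in the proof of \Cref{prop:solomon}, the claim becomes an equality of coefficients; after reindexing the resulting double sum by $\rho:=\sigma^{-1}\pi$, where $\pi$ is the summation variable inside $G_n$ (so that $\pi(1)=1$ is equivalent to $\rho(1)=\sigma^{-1}(1)$), it reduces to the scalar identities
\[
\frac{(-1)^{\cd(\rho^{-1})}}{\binom{n-1}{\cd(\rho^{-1})}}=(-1)^{\rho(1)-1}\sum_{\substack{\sigma\in I_n\\\sigma^{-1}(1)=\rho(1)}}\frac{(-1)^{\cd(\rho^{-1}\sigma^{-1})}}{\binom{n-1}{\cd(\rho^{-1}\sigma^{-1})}}\,,\qquad\rho\in S_n\,.
\]

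To prove these I would pass to the group algebra $\k[S_n]$, keeping the conventions of the proof of \Cref{prop:solomon} so that the $\sigma\leftrightarrow\sigma^{-1}$ twists produced by the shuffle/deconcatenation duality cancel. Under that dictionary the right-hand side of \Cref{prop:solomon} is the first Eulerian idempotent $\cE=\cE_n$ (in the transposed form appearing there), the displayed $G_n$ is the partial sum $\cE_n'$ of $\cE_n$ over the permutations fixing $1$, and the operation $G\mapsto F$ of \eqref{eq:Definition-F-G} is (one-sided) multiplication by the Dynkin element $\theta_n=\sum_{\sigma\in I_n}(-1)^{\sigma^{-1}(1)-1}\sigma$, up to the standard $\sigma\mapsto\sigma^{-1}$ symmetry and a choice of side; this last point is exactly the identity proved by induction in the proof of \Cref{lm:preparation-xi-exp}, which says precisely that bracketing a left-normed product $\ad_{f_{i_n}}\cdots\ad_{f_{i_2}}(f_{i_1})$ with $x$ and re-expanding the result in the right-normed monomials $\ad_{f_{i_n}}\cdots\ad_{f_{i_1}}(x)$ amounts to acting by $\theta_n$. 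Hence the lemma is equivalent to the single identity $\cE_n'\,\theta_n=\cE_n$ in $\k[S_n]$.

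It remains to prove $\cE_n'\,\theta_n=\cE_n$, and here I would invoke the Dynkin--Specht--Wever theorem in its ``left-normed basis'' form. Realize $\k[S_n]$ as the multilinear component of $T(V)$ with $V=\k\langle a_1,\dots,a_n\rangle$; then the image of $\cE_n$ is a copy of the free Lie algebra component $\Lie(n)\subset V^{\otimes n}$, in which the left-normed brackets $[[\dots[a_1,a_{\tau(2)}],\dots],a_{\tau(n)}]$, with $\tau$ running over the orderings of $\{2,\dots,n\}$, form a basis \cite{MR1274784}. The projection $p$ of $V^{\otimes n}$ annihilating every multilinear word not beginning with $a_1$ carries this basis to the linearly independent words $a_1a_{\tau(2)}\cdots a_{\tau(n)}$, so $p$ is injective on $\Lie(n)$; consequently the left-normed coordinates of any $\ell\in\Lie(n)$ coincide with the coefficients of the $a_1$-leading words in $\ell$. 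Applied to $\ell=\cE_n(a_1\otimes\cdots\otimes a_n)$ this says that the left-normed coordinates of $\ell$ are precisely the Eulerian coefficients indexed by the permutations that fix $1$, which is the content of $\cE_n'\,\theta_n=\cE_n$. I expect this argument to be the entire mathematical content of the proof; the main obstacle is bookkeeping --- correctly lining up the two independent $\sigma\leftrightarrow\sigma^{-1}$ twists, the sign $(-1)^{\sigma^{-1}(1)-1}$, and the Rota--Baxter conjugation relating the ``$\prod_{j=1}^{n}$'' and ``$\prod_{j=1}^{n-1}$'' normalisations, and pinning down the side on which $\theta_n$ (or its $\sigma\mapsto\sigma^{-1}$ image) acts, which a small case such as $n=2$ or $n=3$ resolves.
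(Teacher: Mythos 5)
Your proposal is correct and follows essentially the same route as the paper: the heart of both arguments is that the first Eulerian idempotent, being a Lie element, can be rewritten in the left-normed (Dynkin) bracket basis with coefficients read off from the words beginning with $a_1$ (the paper's Equation \eqref{eq:solomon-dynkin}, your identity $\cE_n'\,\theta_n=\cE_n$), followed by expanding the left-normed brackets over the $V$-shaped permutations $I_n$ and translating back through the Zinbiel/rational-function dictionary. The bookkeeping issues you flag (the $\sigma\leftrightarrow\sigma^{-1}$ twists and the Rota--Baxter conjugation between the $\prod_{j=1}^{n}$ and $\prod_{j=1}^{n-1}$ normalisations) are handled the same way in the paper's proofs of \cref{prop:solomon} and \cref{lem:Log-General}.
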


\begin{proof} 
Note that 
\begin{multline}\label{eq:solomon-dynkin}
\frac1n\sum_{\rho\in S_n}\frac{(-1)^{\cd(\rho)}}{\binom{n-1}{\cd(\rho)}}a_{\rho(1)}a_{\rho(2)}\cdots a_{\rho(n)}=\\
\frac1n\sum_{\rho\in S_n,\rho(1)=1}\frac{(-1)^{\cd(\rho)}}{\binom{n-1}{\cd(\rho)}} [\ldots[[a_1,a_{\rho(2)}],a_{\rho(3)}],\ldots,a_{\rho(n)}].
\end{multline}
To see that, we note that the left hand side is in the image in the first Solomon idempotent, and hence is known to be a Lie element, so, in particular, it can be expressed as a linear combination of the Dynkin-type nested commutators displayed on the right hand side; comparing coefficients of the noncommutative monomials starting from $a_1$ completes the proof. (This simple but powerful observation is essentially already present in the work of Dynkin \cite[\S 2]{MR30962}, and was rediscovered more recently in \cite{MR4206665,bandiera2017eulerianidempotentprelielogarithm,MR3115178}.) To continue, one expands 
 \[
[\ldots[[a_1,a_{\rho(2)}],a_{\rho(3)}],\ldots,a_{\rho(n)}]
 \]
in the basis of associative noncommutative monomials, which gives us a sum over $I_n$. Finally, remembering, like in the proof of Proposition \ref{prop:solomon}, that the elements 
 \[ 
\prod_{j=1}^{n-1} \frac {1}{i_{\rho(1)}+\cdots+i_{\rho(j)}}
 \]
correspond to 
 \[
\nu(\ldots\nu(\nu(a_{\rho(1)},a_{\rho(2)}),\ldots),a_{\rho(n)}),
 \]
which, in turn, correspond to the elements $a_{\rho(1)}a_{\rho(2)}\cdots a_{\rho(n)}$ in the reduced tensor space, the result follows. 
\end{proof}

\begin{remark} 
The statement of Lemma \ref{lem:Log-General} can also be directly derived from the identity
  \[
  \cE\cdot \cV_i = \cE,
  \] 
  where $\cE$ is the first Eulerian idempotent defined above and 
  \[
  \cV_i \coloneqq (-1)^{i-1}\, \sum_{\substack {\lambda \in I_n \\ \lambda(i) =1 } } \lambda, \qquad i=1,\dots,n.
  \]
  In order to prove this identity, note that $\cE$ and $\cV_i$ belong to the so-called Solomon descent algebra~\cite{Garsia-Reutenauer}. Then one can use the expansion of $\cE$ and $\cV_i$ in the basis elements and the multiplication table in the Solomon descent algebra \cite[Equation~(1.3)]{Garsia-Reutenauer}.
\end{remark}

This lemma, together with our previous results, implies the following statement, which is what we aimed to prove.

\begin{theorem}\label{th:formulaLie} For the element $\xi$ represented as 
  \begin{align*}
    & \xi  = \sum_{n=1}^\infty \sum_{i_1,\dots,i_n \geq 1}
    \Big[\frac1n\sum_{\substack{\rho\in S_n\\ \rho(1)=1}} \frac {(-1)^{\cd(\rho^{-1})}}{\binom{n-1}{\cd(\rho^{-1})}}  
\prod_{j=1}^{n} \frac {1}{i_{\rho(1)}+\cdots+i_{\rho(j)}}\Big] \, \ad_{f_{i_n}} \ad_{f_{i_{n-1}}}\cdots \ad_{f_{i_2}} (f_{i_1})\,,
  \end{align*}
  we have
  \[
  \exp(\ad_{\xi})(\delta+\alpha) = \delta+\lambda.\alpha.
  \]
\end{theorem}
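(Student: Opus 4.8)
The plan is to assemble the proof from the three ingredients already established: the telescoping identity \eqref{eq:telescope}, the combinatorial translation of $\operatorname{ad}_{\xi}$ provided by Lemma \ref{lm:preparation-xi-exp}, and the explicit solution of the resulting functional equation given by Proposition \ref{prop:solomon} together with Lemma \ref{lem:Log-General}. First I would recall from \eqref{eq:telescope} that if, for every $k\geq 1$, the degree-$k$ component of $\exp(\operatorname{ad}_{\xi})(\delta+\alpha)$ equals
 \[
\sum_{i_1+\cdots+i_k=n}\prod_{j=1}^{k}\frac{1}{i_1+\cdots+i_j}\,\operatorname{ad}_{f_{i_k}}\cdots\operatorname{ad}_{f_{i_1}}(\delta+\alpha),
 \]
then summing these identities over $k$ produces a telescoping cancellation whose total is precisely $\sum_{n\geq 1}\frac1n\lambda_n=\delta+\lambda.\alpha$. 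So it remains to verify that the $\xi$ exhibited in the statement does have degree-$k$ components of exactly this shape when exponentiated.

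Next I would unwind $\exp(\operatorname{ad}_{\xi})$ in terms of the nested operators $\operatorname{ad}_{f_{i}}$. Writing $\xi=\sum_{n\geq 1}\xi_n$ with $\xi_n$ the weight-$n$ piece as in \eqref{eq:form-xi}, the degree-$k$ (in the sense of the number of $f$'s applied) part of $\exp(\operatorname{ad}_{\xi})(\delta+\alpha)$ is obtained by collecting all terms $\frac{1}{m!}\operatorname{ad}_{\xi_{n_m}}\cdots\operatorname{ad}_{\xi_{n_1}}(\delta+\alpha)$ with $n_1+\cdots+n_m=n$ and $n_1,\dots,n_m$ summing to the total weight. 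By the second formula in Lemma \ref{lm:preparation-xi-exp}, each such term expands into a sum over $i_1,\dots,i_n$ of the product $\prod_{j=1}^{m}F_{n_j}(\ldots)$ times $\operatorname{ad}_{f_{i_n}}\cdots\operatorname{ad}_{f_{i_1}}(\delta+\alpha)$. Therefore the coefficient of a fixed monomial $\operatorname{ad}_{f_{i_n}}\cdots\operatorname{ad}_{f_{i_1}}(\delta+\alpha)$ in $\exp(\operatorname{ad}_{\xi})(\delta+\alpha)$ is exactly $\exp\bigl[\sum_{j\geq 1}F_j\bigr](i_1,\dots,i_n)$ computed in the convolution algebra built from $\RF$, which is the left-hand side of \eqref{eq:Exp-F-E}.

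It then suffices to check that with $G_n$ as in the theorem, the associated $F_n$ (defined through \eqref{eq:Definition-F-G}) satisfy \eqref{eq:Exp-F-E}, equivalently \eqref{eq:Log-E-F}. Lemma \ref{lem:Log-General} states precisely that this $G_n$ yields the $F_n$ of Proposition \ref{prop:solomon}, and Proposition \ref{prop:solomon} identifies that $F_n$ with the $n$-th homogeneous component of $\log\bigl[1+\sum_{n\geq 1}\prod_{j=1}^{n}\frac{1}{i_1+\cdots+i_j}\bigr]$. Exponentiating gives back $1+\sum_{n\geq 1}\prod_{j=1}^{n}\frac{1}{i_1+\cdots+i_j}$, so the coefficient of $\operatorname{ad}_{f_{i_n}}\cdots\operatorname{ad}_{f_{i_1}}(\delta+\alpha)$ in $\exp(\operatorname{ad}_{\xi})(\delta+\alpha)$ is $\prod_{j=1}^{n}\frac{1}{i_1+\cdots+i_j}$, which is exactly the shape required to run the telescoping argument of \eqref{eq:telescope}. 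Combining the two halves yields $\exp(\operatorname{ad}_{\xi})(\delta+\alpha)=\delta+\lambda.\alpha$.

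The main obstacle, and the step deserving the most care, is bookkeeping the two different gradings simultaneously: the homological-type nesting length $k$ controlling the telescoping in \eqref{eq:telescope}, and the weight grading $w$ (with $w(f_i)=i$) controlling which convolution power of $\sum_j F_j$ appears. One must be sure that the convolution product in $\RF$ matches the combinatorics of concatenating the blocks of indices $(i_{n_1+\cdots+n_{m-1}+1},\dots,i_{n_1+\cdots+n_m})$ in Lemma \ref{lm:preparation-xi-exp}, and that the passage between the $F$-side (involving the Rota--Baxter-conjugated product with a Zinbiel splitting) and the telescoped right-hand side is the conjugation already spelled out in the proof of Proposition \ref{prop:solomon}. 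Once these identifications are made verbatim, no further computation is needed; the statement follows by concatenating the cited results.
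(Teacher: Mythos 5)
Your proposal is correct and follows exactly the route the paper takes: the paper states Theorem \ref{th:formulaLie} as an immediate consequence of the telescoping identity \eqref{eq:telescope}, Lemma \ref{lm:preparation-xi-exp}, Proposition \ref{prop:solomon}, and Lemma \ref{lem:Log-General}, assembled precisely as you describe. Your closing remark on matching the convolution product in $\RF$ with the block-concatenation combinatorics of Lemma \ref{lm:preparation-xi-exp} is the one point the paper leaves implicit, and you have identified it correctly.
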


\subsection{An explicit formula for the gauge symmetry on the group level}\label{sec:dendriform}

For this argument, it will be convenient to rewrite our equation in the standard Foissy functor form
\begin{equation}\label{eq:DifferentialOnf-i-shifted}
[b_n,d] = 
a_n - \sum_{\substack{i+j = n \\ i,j\geq 1}} \frac{i}{i+j}[b_i,a_j]\,, 
\end{equation}
where we denote $a_n:=\frac1n\lambda_n$, $b_n:=\frac1n f_n$, and $d:=\delta+\alpha$. Note that at this point we do not need to know the specific form of elements $a_i, b_i, d$, so we may consider our problem in the completion of the free Lie algebra generated by these symbols. We shall examine it in the completed universal enveloping algebra. The gauge group is the group of group-like elements of the universal enveloping algebra, so Property \eqref{eq:GaugeMain} can be written in the form 
\begin{equation}\label{eq:AppndxGaugeMain-shifted-logarithm}
(1+x)d = \left(d+ \sum_{i=1}^\infty a_i\right)(1+x) \,.
\end{equation}
Recall the weight grading $w$ introduced above; it defines a weight grading on the universal enveloping algebra for which we have $w(a_i)=w(b_i)=i$, and $w(d)=0$. Let us consider the ideal of the universal enveloping algebra which is the kernel of the map to the universal enveloping of the Lie algebra generated by $d$ alone. The grading on that ideal induced by the weight grading $w$ is strictly positive, so we can use the Foissy functor to convert it into a dendriform algebra. Moreover, we can adjoint $d$ to that dendriform algebra provided that we never have to compute dendriform products of two homogeneous elements of degree zero (this generalises the unit trick of \cite[Sec.~2.1]{MR1879927}). Once that is done, we observe that Equation \eqref{eq:DifferentialOnf-i-shifted} has a simple dendriform expression. Indeed, we have 
\begin{gather*}
[b_i,d]=b_id-db_i=b_i\prec d-d\succ b_i\, ,\\
a_i-\sum_{\substack{j+k = i \\ j,k\geq 1}} \frac{j}{i} [b_j,a_k]=
a_i-\sum_{\substack{j+k = i \\ j,k\geq 1}} \frac{j}{i} (b_ja_k-a_kb_j)=a_i-\sum_{\substack{j+k = i \\ j,k\geq 1}} (b_j\prec a_k-a_k\succ b_j) \, ,
\end{gather*}
and therefore if we introduce elements $\alpha=\sum_{i\ge 1}a_i$ and $\beta=\sum_{i\ge 1}b_i$, the  Equation \eqref{eq:DifferentialOnf-i-shifted}
can be rewritten as 
\begin{equation}\label{eq:Dendriform}
\beta\prec (d+\alpha)=\alpha+(d+\alpha)\succ\beta ,
\end{equation}
and Equation \eqref{eq:AppndxGaugeMain-shifted-logarithm} becomes
 \[
(1+x)d=(d+ \alpha)(1+x).
 \]
Our next result furnishes such element; the computation below is closely related to recent work \cite{MR2556138,MR2422313} on equations and identities in dendriform algebras, see also \cite{MR3070646}. To state the following result, we introduce right-normed dendriform products by putting, for an element $u$ of a dendriform algebra, $u^{\prec 1}=u$ and $u^{\prec (n+1)}=u\prec u^{\prec n}$.

\begin{theorem}\label{th:formulaGp}
The element 
 \[
1+\sum_{k\ge 1} \beta^{\prec k}
 \]
is group-like for the standard coproduct $\Delta$ of the universal enveloping algebra and satisfies 
 \[
\left(1+\sum_{k\ge 1}\beta^{\prec k}\right)d=(d+ \alpha)\left(1+\sum_{k\ge 1}\beta^{\prec k}\right).
 \]
\end{theorem}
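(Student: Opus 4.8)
The plan is to verify the two assertions of \Cref{th:formulaGp} separately: first that $1+\sum_{k\ge 1}\beta^{\prec k}$ is group-like, and second that it conjugates $d$ into $d+\alpha$ in the required sense. Throughout I would work in the completed enveloping algebra, using the weight grading $w$ (with $w(d)=0$, $w(a_i)=w(b_i)=i$) to make all the infinite sums meaningful: $\beta=\sum_{i\ge 1}b_i$ has strictly positive weight, so $\beta^{\prec k}$ has weight at least $k$, and the series converges in the $w$-adic topology. The dendriform operations $\prec,\succ$ on the positively-weighted ideal come from the Foissy functor applied to the kernel of the projection onto the sub-enveloping-algebra of $d$, and $d$ itself is adjoined under the stated proviso (never multiplying two weight-zero elements), exactly as set up before the statement.

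For the group-likeness, I would first record the coproduct formula for right-normed dendriform products. Since $\beta$ lies in the Lie algebra (it is a sum of the primitive elements $b_i$), it is primitive: $\Delta\beta=\beta\otimes 1+1\otimes\beta$. The key combinatorial identity is that if $u$ is primitive then $\Delta(u^{\prec k})=\sum_{p+q=k}u^{\prec p}\otimes u^{\prec q}$ (with the convention $u^{\prec 0}=1$); this is precisely the statement that right-normed dendriform powers of a primitive element behave like a divided-power / exponential sequence under deconcatenation, and it should follow by an easy induction on $k$ using the dendriform axioms together with the compatibility of $\Delta$ with the product $u v=u\prec v+u\succ v$ and the fact that $\Delta$ is an algebra map. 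Summing over $k\ge 0$ then gives $\Delta\!\left(1+\sum_{k\ge 1}\beta^{\prec k}\right)=\left(1+\sum_{k\ge 1}\beta^{\prec k}\right)\otimes\left(1+\sum_{k\ge 1}\beta^{\prec k}\right)$, which is group-likeness. (Equivalently, $1+\sum_{k\ge1}\beta^{\prec k}$ is a dendriform analogue of the exponential, and the identity just says the exponential of a primitive element is group-like.)

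For the conjugation identity, I would start from \Cref{eq:Dendriform}, i.e.\ $\beta\prec(d+\alpha)=\alpha+(d+\alpha)\succ\beta$, which is the reformulation of \Cref{eq:DifferentialOnf-i-shifted} established just above. Writing $Y:=1+\sum_{k\ge1}\beta^{\prec k}$, I want $Yd=(d+\alpha)Y$. The natural approach is to prove by induction on $k$, weight by weight, a ``telescoping'' identity expressing $\beta^{\prec k}(d+\alpha)$ in terms of $(d+\alpha)\beta^{\prec (k)}$ plus lower pieces. Concretely, $\beta^{\prec(k+1)}=\beta\prec\beta^{\prec k}$, and using the dendriform axiom $(a\prec b)\prec c=a\prec(bc)$ one can relate $\beta^{\prec(k+1)}\,d$ (rewriting $d$-multiplication through $\prec$ and $\succ$) to $\beta\prec(\beta^{\prec k}d)$; substituting the inductive description of $\beta^{\prec k}d$, then invoking \Cref{eq:Dendriform} to convert a $\beta\prec(d+\alpha)$ into $\alpha+(d+\alpha)\succ\beta$, and using the mixed axiom $(a\succ b)\prec c=a\succ(b\prec c)$ and $(ab)\succ c=a\succ(b\succ c)$ to reassemble, should yield the recursion that makes $\sum_k\beta^{\prec k}d-(d+\alpha)\beta^{\prec k}$ telescope to zero. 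I expect the main obstacle to be precisely this bookkeeping: keeping careful track of which of $d$, $\alpha$, $\beta$ are allowed to appear on which side of $\prec$ and $\succ$ (the adjoining-$d$ proviso forbids certain degree-zero products), and organizing the induction so that the three dendriform axioms are applied in the right order to collapse the sum. Once the recursion is set up correctly the cancellation is forced, but verifying that every term produced is legal (no forbidden weight-zero dendriform product ever arises) and that the series manipulations are valid in the $w$-adic completion is where the real care is needed.
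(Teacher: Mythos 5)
Your overall strategy is the same as the paper's: prove group-likeness by an induction on the $\prec$-power, and get the intertwining identity by a telescoping/fixed-point argument driven by Equation \eqref{eq:Dendriform} and the dendriform axioms. For the second half your plan is sound (the paper organizes the telescoping more cleanly by showing that \emph{both} sides satisfy the recursion $F=d+\beta\prec F$, which has a unique solution by weight considerations --- you may find that framing easier than a weight-by-weight induction), but there is a genuine gap in your first half.

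The gap is the claim that $\Delta(u^{\prec k})=\sum_{p+q=k}u^{\prec p}\otimes u^{\prec q}$ for primitive $u$ ``follows by an easy induction using the dendriform axioms and the fact that $\Delta$ is an algebra map.'' The hypothesis that $\Delta$ is multiplicative only controls $\Delta(uv)=\Delta(u\prec v+u\succ v)$; it gives you no handle on $\Delta(u\prec v)$ separately, and the dendriform axioms say nothing about $\Delta$ at all. (If $\beta$ were weight-homogeneous one could write $\beta^{\prec k}=\tfrac{1}{k!}\beta^k$ and conclude formally, but $\beta=\sum_i b_i$ is inhomogeneous, so $\beta\prec\beta$ is \emph{not} a scalar multiple of $\beta^2$.) The way to fill this is to exploit the explicit Foissy coefficients: decompose $\beta^{\prec m}=\sum_i b_i^{(m)}$ into weight-homogeneous pieces, use that on homogeneous elements $b_i\prec y=\tfrac{i}{i+|y|}\,b_i y$ is a scalar multiple of the associative product (so that the algebra-map property of $\Delta$ \emph{does} apply), and then verify that after applying $\Delta(b_i)=b_i\otimes 1+1\otimes b_i$ the resulting terms regroup, with coefficients $\tfrac{i+r}{s}$ and $\tfrac{s-r}{s}$ summing correctly, into the claimed formula
\[
\Delta\bigl(b_s^{(m+1)}\bigr)=\sum_{a=0}^{s}\sum_{p=0}^{m+1}b_a^{(p)}\otimes b_{s-a}^{(m+1-p)}\,.
\]
This regrouping is the actual content of the group-likeness statement; without it (or an equivalent bidendriform-type compatibility between $\Delta$ and $\prec$, which would itself have to be proved), your induction does not close.
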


\begin{proof}
Let us first prove the group-like property. We write $\beta^{\prec m}=\sum_{i\ge 1} b_i^{(m)}$, where $w(b_i^{(m)})=i$, and impose $b_i^{(0)}=\delta_{i,0}$. It is enough to show that for all $i,k$ we have
 \[
\Delta(b_i^{(k)})=\sum_{p=0}^{k}\sum_{r=0}^{i}b_r^{(p)}\otimes b_{i-r}^{(k-p)} \, .
 \]
We shall prove this result by induction on $k$. For $k=1$, this holds because $\beta=\sum_{i\ge 1}b_i$ is a primitive element of $U$ by definition. Suppose that we established this formula for $k=m$, and let us prove it for $k=m+1$. 
We have the following equalities 
\begin{multline*}
\Delta(b_s^{(m+1)})=\Delta\left(\sum_{i=1}^{s-1} b_i\prec b_{s-i}^{(m)}\right)
=\sum_{i=1}^{s-1}\Delta\left(b_i\prec b_{s-i}^{(m)}\right)=\sum_{i=1}^{s-1}\frac{i}{i+j}\Delta\left(b_ib_{s-i}^{(m)}\right)=\\
\sum_{i=1}^{s-1}\frac{i}{s}(b_i\otimes 1+1\otimes b_i)\left(\sum_{p=0}^{m}\sum_{r=0}^{s-i}b_r^{(p)}\otimes b_{s-i-r}^{(m-p)}\right)
=\sum_{i=1}^{s-1}\frac{i}{s}\sum_{p=0}^{m}\sum_{r=0}^{s-i}\left(b_ib_r^{(p)}\otimes b_{s-i-r}^{(m-p)}+b_r^{(p)}\otimes b_ib_{s-i-r}^{(m-p)}\right)\\
=\sum_{i=1}^{s-1}\sum_{p=0}^{m}\sum_{r=0}^{s-i}\left(\frac{i+r}{s}b_i\prec b_r^{(p)}\otimes b_{s-i-r}^{(m-p)}+\frac{s-r}{s}b_r^{(p)}\otimes b_i\prec b_{s-i-r}^{(m-p)}\right) .
\end{multline*}
We collect all elements with the fixed value of $i+r$ in the first sum, and with the fixed value of $s-r$ in the second sum, obtaining
 \[
\Delta(b_s^{(m+1)})=
\sum_{u=1}^{s}\sum_{p=0}^{m}\frac{u}{s}b_u^{(p+1)}\otimes b_{s-u}^{(m-p)}+
\sum_{r=0}^{s}\sum_{p=0}^{m}\frac{s-r}{s}b_r^{(p)}\otimes b_{s-i}^{(m-p)}=
\sum_{a=0}^{s}\sum_{p=0}^{m+1}b_a^{(p)}\otimes b_{s-a}^{(m+1-p)} \, ,
 \]
as required.

To complete the proof, we shall perform some further dendriform calculations. Using the properties $d\prec u=u\succ d=0$ for $u$ of positive weight grading and the dendriform axioms, we obtain
\begin{multline*}
\left(1+\sum_{k\ge 1} \beta^{\prec k}\right)d=d+\beta d+\sum_{k\ge 1} \beta^{\prec (k+1)}d
=d+\beta\prec d+\sum_{k\ge 1}(\beta\prec \beta^{\prec k})\prec d=\\
d+\beta\prec d+\sum_{k\ge 1}\beta\prec(\beta^{\prec k}d)
=d+\beta\prec d+\sum_{k\ge 1}\beta\prec(\beta^{\prec k} d) 
=d+\beta\prec \left[\left(1+\sum_{k\ge 1}\beta^{\prec k}\right) d\right] 
\end{multline*}
Similarly, using those properties and Equation \eqref{eq:Dendriform}, we obtain
\begin{multline*}
(d+\alpha)\left(1+\sum_{k\ge 1} \beta^{\prec k}\right)=d+\alpha+(d+\alpha)\beta+\sum_{k\ge 1}(d+\alpha)\beta^{\prec (k+1)}=\\
d+\alpha+(d+\alpha)\prec \beta+(d+\alpha)\succ \beta+\sum_{k\ge 1}((d+\alpha)\prec \beta^{\prec (k+1)} + (d+\alpha)\succ (\beta\prec \beta^{\prec k}))=\\
d+\alpha+\alpha\prec \beta+(d+\alpha)\succ \beta+\sum_{k\ge 1}(\alpha\prec \beta^{\prec (k+1)} + ((d+\alpha)\succ \beta)\prec \beta^{\prec k})=\\
d+\alpha+\alpha\prec \beta+\beta\prec (d+\alpha)-\alpha+\sum_{k\ge 1}(\alpha\prec \beta^{\prec (k+1)}+(\beta\prec (d+\alpha)-\alpha)\prec \beta^{\prec k})=\\
d+\alpha\prec \beta+\beta\prec \alpha+\sum_{k\ge 1}(\alpha\prec \beta^{\prec (k+1)}-\sum_{k\ge 1}\alpha\prec \beta^{\prec k}+\sum_{k\ge 1}(\beta\prec (d+\alpha))\prec \beta^{\prec k})=\\
d+\alpha\prec \beta+\beta\prec \alpha-\alpha\prec \beta+\sum_{k\ge 1}\beta\prec ((d+\alpha) \beta^{\prec k})=\\
d+\beta\prec (d+\alpha)+\sum_{k\ge 1}\beta\prec ((d+\alpha) \beta^{\prec k})
=d+\beta\prec \left[(d+\alpha)\left(1+\sum_{k\ge 1}\beta^{\prec k}\right)\right]
\end{multline*}
Therefore, both sides of the equality 
 \[
\left(1+\sum_{k\ge 1} \beta^{\prec k}\right)d=(d+\alpha)\left(1+\sum_{k\ge 1} \beta^{\prec k}\right)
 \]
are solutions to the equation $F=d+\beta\prec F$ which clearly determines the element $F$ uniquely.
\end{proof}

\subsection{Comparison of two formulas}

Let us explain how to compare the two formulas. The resulting formula of Theorem \ref{th:formulaLie} can be rewritten as 
 \begin{align*}
    \xi & = \sum_{n=1}^\infty \sum_{i_1,\dots,i_n \geq 1}
    \Big[\frac1n\sum_{\substack{\rho\in S_n\\ \rho(1)=1}} \frac {(-1)^{\cd(\rho^{-1})}}{\binom{n-1}{\cd(\rho^{-1})}}  
\prod_{j=1}^{n} \frac {1}{i_{\rho(1)}+\cdots+i_{\rho(j)}}\Big] \, \ad_{f_{i_n}} \cdots \ad_{f_{i_2}} (f_{i_1})\\
&= \sum_{n=1}^\infty \sum_{\substack{i_1,\dots,i_n \geq 1\\ \rho\in S_n\colon \rho(1)=1}}
    \Big[\frac1n \frac {(-1)^{\cd(\rho^{-1})}}{\binom{n-1}{\cd(\rho^{-1})}}  
\prod_{j=1}^{n} \frac {1}{i_{\rho(1)}+\cdots+i_{\rho(j)}} \, \ad_{f_{i_{\rho^{-1}(\rho(n))}}} \cdots \ad_{f_{i_{\rho^{-1}(\rho(2))}}} (f_{i_{\rho^{-1}(\rho(1))}})\Big]\\
&= \sum_{n=1}^\infty  \sum_{\substack{i_1,\dots,i_n \geq 1\\ \rho\in S_n\colon \rho(1)=1}}
    \Big[\frac1n\frac {(-1)^{\cd(\rho^{-1})}}{\binom{n-1}{\cd(\rho^{-1})}}  
\prod_{j=1}^{n} \frac {1}{i_{1}+\cdots+i_{j}} \, \ad_{f_{i_{\rho^{-1}(n)}}} \cdots \ad_{f_{i_{\rho^{-1}(2)}}} (f_{i_{\rho^{-1}(1)}})\Big]\\
&=\sum_{n=1}^\infty \sum_{i_1,\dots,i_n \geq 1}\prod_{j=1}^{n} \frac {1}{i_{1}+\cdots+i_{j}}
\sum_{\substack{\rho\in S_n\\ \rho(1)=1}}
    \Big[\frac1n\frac {(-1)^{\cd(\rho^{-1})}}{\binom{n-1}{\cd(\rho^{-1})}}  
 \, \ad_{f_{i_{\rho^{-1}(n)}}} \ad_{f_{i_{\rho^{-1}(n-1)}}}\cdots \ad_{f_{i_{\rho^{-1}(2)}}} (f_{i_{\rho^{-1}(1)}})\Big]\\
&= \sum_{n=1}^\infty \sum_{i_1,\dots,i_n \geq 1}\prod_{j=1}^{n} \frac {1}{i_{1}+\cdots+i_{j}}
\sum_{\substack{\rho\in S_n\\ \rho(1)=1}}
    \Big[\frac1n\frac {(-1)^{\cd(\rho^{-1})}}{\binom{n-1}{\cd(\rho^{-1})}}  f_{i_{\rho^{-1}(n)}}f_{i_{\rho^{-1}(n-1)}}\cdots f_{i_{\rho^{-1}(2)}}f_{i_{\rho^{-1}(1)}}\Big]
\end{align*}

 \[
\sum_{n=1}^\infty \sum_{i_1,\dots,i_n \geq 1} \cE\Big(\prod_{j=1}^{n}\frac {1}{i_{1}+\cdots+i_{j}}
 f_{i_{n}}f_{i_{n-1}}\cdots f_{i_{2}}f_{i_{1}}\Big),
 \]
or, recalling the notation of Section \ref{sec:dendriform},
\begin{multline*}
\sum_{n=1}^\infty \sum_{i_1,\dots,i_n \geq 1} \cE\Big(\prod_{j=1}^{n}\frac {i_j}{i_{1}+\cdots+i_{j}}
 b_{i_{n}}b_{i_{n-1}}\cdots b_{i_{2}}b_{i_{1}}\Big)=\\
\sum_{n=1}^\infty \sum_{i_1,\dots,i_n \geq 1} \cE\Big(
 b_{i_{n}}\prec(b_{i_{n-1}}\prec (\cdots (b_{i_{2}}\prec f_{b_{1}})\cdots))\Big),
\end{multline*}
where $\prec$ is the dendriform operation obtained by applying the Foissy functor. This is in perfect agreement with the result of Theorem \ref{th:formulaGp}: we apply the first Eulerian idempotents to a group-like element, thus extracting its logarithm.

\appendix

\section{The rational functions representation of the Rota--Baxter identity}\label{app:RB}

In this section, we prove the claim of Section \ref{sec:ratfct} that the morphism from the operad $\RB=\RB(\Com)$ of commutative Rota--Baxter algebras to the operad $\RF$ is injective. Let us start with a combinatorial description of a basis of the operad $\RB$. For that, we introduce the following combinatorial construction. For a positive integer $n$, to a sequence of nested sets 
 \[
\varnothing\neq I_1\subsetneq I_2\subsetneq \ldots \subsetneq I_k\subseteq\{1,\ldots,n\}
 \]
and a sequence of positive integers $(d_1,\ldots,d_k)$, we may associate a \emph{nested Rota--Baxter monomial}
 \[
R^{d_k}(\cdots (R^{d_2}(R^{d_1}(a_{I_1})a_{I_2\setminus I_1}))\cdots a_{I_k\setminus I_{k-1}})a_{\{1,\ldots,n\}\setminus I_k},
 \]
where we put $a_J:=\prod_{i\in J} a_i$. The following result can be inferred from the construction of the free Rota--Baxter algebra \cite{MR1744484}, but we give an alternative argument using operad theory, which we believe to be of independent interest.

\begin{proposition}\label{prop:RB-monomials}
The nested Rota--Baxter monomials form a basis in the component $\RB(n)$ of the Rota--Baxter operad.
\end{proposition}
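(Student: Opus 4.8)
The plan is to prove the two assertions implicit in the statement separately: that the nested Rota--Baxter monomials \emph{span} $\RB(n)$, by a terminating rewriting procedure built from the Rota--Baxter relation, and that they are \emph{linearly independent}, by pushing them forward along the morphism of operads $\RB\to\RF$ (here $\RB=\RB(\Com)$) sending $R$ to $\tfrac1{x_1}$ and observing that the resulting rational functions are visibly independent. The second half will, as a by-product, also give the injectivity of $\RB\to\RF$ promised in Section~\ref{sec:ratfct}.

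For spanning I would first use commutativity and associativity of $\mu$ to normalise an arbitrary Rota--Baxter monomial into a rooted tree whose internal vertices alternate between products of at least two subtrees and stacks $R^{d}$ of unary operators, with the generators $a_i$ at the leaves; a monomial is nested precisely when no product vertex has two or more children rooted at an $R$-stack. One then induces on the tree, the only non-immediate case being a product vertex carrying two $R$-stack children $R^{d}(u)$ and $R^{e}(v)$, where I would apply the Rota--Baxter relation as $R^{d}(u)\,R^{e}(v)=R\bigl(R^{d}(u)\,R^{e-1}(v)\bigr)+R\bigl(R^{d-1}(u)\,R^{e}(v)\bigr)$, lowering by one the number of $R$-rooted children of that vertex. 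Termination is the point: I would introduce $w(M):=\sum_{\nu}\operatorname{depth}_R(\nu)$, the sum over all unary vertices $\nu$ of the number of unary vertices on the root-to-$\nu$ path inclusive, note that $w(M)\le\binom{N+1}{2}$ for $N$ the total number of occurrences of $R$ (an invariant of the relation), and verify by a short computation that each application of the relation strictly increases $w$ --- the increment being $d$ plus the number of unary vertices inside $u$ for the first summand, and $e$ plus the number inside $v$ for the second, independently of the location of the rewrite. Hence iterated rewriting terminates at a $\k$-linear combination of normal forms, which are exactly the nested Rota--Baxter monomials, proving spanning.

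For independence, recall that $\mu\mapsto 1$, $R\mapsto\tfrac1{x_1}$ defines an operad morphism $\RB\to\RF$; the only thing to check is that $\tfrac1{x_1}$ satisfies the Rota--Baxter identity in $\RF$, a one-line computation with the composition rule. Using that rule one finds that the nested Rota--Baxter monomial attached to $\varnothing\subsetneq I_1\subsetneq\cdots\subsetneq I_k\subseteq\{1,\dots,n\}$ and $(d_1,\dots,d_k)$ goes to $\prod_{i=1}^{k}\bigl(\sum_{j\in I_i}x_j\bigr)^{-d_i}\in\k(x_1,\dots,x_n)$: each $\mu$ contributes the constant $1$, and the $i$-th stack $R^{d_i}$ divides by the sum of the variables attached to the leaves below it, namely $x_{I_i}:=\sum_{j\in I_i}x_j$. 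These rational functions, over distinct data $(I_\bullet,d_\bullet)$, are $\k$-linearly independent: the $x_I$ are irreducible, pairwise non-proportional linear forms, and a hypothetical dependence can be defeated by induction on $n$ by restricting along the hyperplane $\{x_{I_*}=0\}$, where $I_*$ is the inclusion-smallest subset that occurs --- this restriction carries $x_I^{-1}$ with $I\supseteq I_*$ to $x_{I\setminus I_*}^{-1}$, hence preserves the chain condition and produces a dependence in $n-1$ variables. (Alternatively, this independence can be read off the explicit description of the free commutative Rota--Baxter algebra in~\cite{MR1744484}.) Since the images of the nested Rota--Baxter monomials are independent, so are the monomials themselves in $\RB(n)$; together with spanning this proves the proposition, and since a spanning set mapping to an independent set is a basis, it also shows $\RB\to\RF$ is injective.

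The step I expect to require the most care is the linear independence of the functions $\prod_i x_{I_i}^{-d_i}$: the chain hypothesis on the $I_i$ is essential and must be tracked throughout --- e.g.\ $\tfrac1{x_1 x_2}=\tfrac1{x_1(x_1+x_2)}+\tfrac1{x_2(x_1+x_2)}$ lies in the span of chain-supported functions even though $\{1\},\{2\}$ is not a chain, so clearing denominators naïvely destroys precisely the information needed --- and the hyperplane-restriction induction is designed to keep it. The termination argument for spanning is routine by comparison once $w$ is written down; alternatively one could check that the rewriting system is confluent, the only ambiguity being a product vertex with three or more $R$-rooted children, and conclude that the normal forms form a basis directly by the diamond lemma, but routing independence through $\RF$ is both shorter and in keeping with the aim of the appendix.
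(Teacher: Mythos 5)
Your proposal is correct, but the independence half takes a genuinely different route from the paper's proof of this proposition. For spanning, both arguments are the same: treat the Rota--Baxter relation as a rewriting rule pushing $R$ outside products; the paper leaves termination as ``an easy inductive argument'' while you supply an explicit monotone measure, which is a welcome amount of extra care. For linear independence, the paper deliberately avoids the representation in $\RF$ and instead runs a dimension count: it introduces the weight grading by the number of $R$'s, invokes the minimal model of $\RB$ from \cite{MR3084563} to get the compositional inverse of the (weighted) Poincar\'e series, inverts it via Eulerian polynomials, and matches the answer with the Hilbert series of the Stanley--Reisner ring of the Boolean lattice, whose basis is indexed by exactly the same data (chains of subsets with multiplicities). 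You instead push the monomials forward along $R\mapsto\tfrac1{x_1}$ and prove directly that the rational functions $\prod_i x_{I_i}^{-d_i}$ attached to chains are independent, via restriction to the hyperplane $x_{I_*}=0$ for an inclusion-minimal initial set $I_*$ --- this is, in substance, precisely the Laurent-expansion argument the paper itself uses in Appendix~\ref{app:RB} to prove injectivity of $\RB^\theta\to\RF^\theta$ \emph{after} taking the basis property as input. So your route merges Proposition~\ref{prop:RB-monomials} with the appendix theorem (at $\theta=0$): a spanning set mapping to an independent set is a basis and the map is injective, with no circularity. What you lose is exactly what the paper advertises as the point of its argument (``an alternative argument using operad theory, which we believe to be of independent interest''), namely the connection to the minimal model and to Eulerian polynomials; what you gain is a shorter, self-contained proof that delivers the injectivity of $\RB\to\RF$ for free. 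Your caution about the independence step is well placed --- the example $\tfrac1{x_1x_2}=\tfrac1{x_1(x_1+x_2)}+\tfrac1{x_2(x_1+x_2)}$ shows the chain condition is doing real work --- but the hyperplane/Laurent-coefficient induction does close it, for the reason you indicate: $x_I$ is proportional to $x_{I_*}$ only for $I=I_*$, and minimality of $I_*$ guarantees it occurs only as the bottom of a chain, so extracting the most singular coefficient at $x_{I_*}=0$ isolates a shorter dependence of the same shape.
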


\begin{proof}
First, the Rota--Baxter relation may be viewed as a rewriting rule that replaces $R(f)R(g)$, an element where $R$ is applied before computing the product by a combination of terms where one copy of $R$ is applied after computing the product. Using that rewriting rule, it is easy to prove by an inductive argument that nested Rota--Baxter monomials span the $n$-th component of the operad $\RB$. To show that they are linearly independent, we shall use a dimension counting argument. Components of the operad $\RB$ are infinite-dimensional, so one has to be a bit careful. Our strategy is to introduce the weight grading for which $R$ is of weight one and $\mu$ is of weight zero; for that weight grading the Rota--Baxter relation is homogeneous of degree two, and all the weight graded components of the operad $\RB$ are finite-dimensional. To show that the nested Rota--Baxter monomials form a basis, it is enough to show that for each weight grading $d$, the number of nested Rota--Baxter monomials with $d_1+\cdots+d_k=d$ is equal to the dimension of the respective weight graded component of $\RB(n)$. To compute that latter dimension independently, we may use the result of \cite[Cor.~5.10]{MR3084563} that describes the minimal model of the operad $\RB$. Indeed, it is known that, if we denote by $P(t)$ the Poincaré series of the space of generators of the minimal model of an operad, the series $t-P(t)$ is the compositional inverse of the Poincaré series of an operad; an extra weight grading can be incorporated in that statement without any problem. The minimal model of \emph{loc. cit.} has, in each arity $n$, $(n-1)!$ basis elements of homological degree $n-2$ and weight zero and $(n-1)!$ basis elements of homological degree $n-1$ and weight $n$. Therefore, the compositional inverse of the Poincaré series of the operad $\RB$ is
 \[
\sum_{n\ge 1}\frac{(-1)^{n-1}t^n(1-q^n)}{n}.
 \]  
It turns out that the inverse of that series is given by $\sum_{n\ge 1}\frac{P_n(q)}{n!(1-q)^n}$, where $P_n(q)$ is the Eulerian polynomial (the polynomial enumerating permutations according to their number of descents); this is essentially \cite[Exercise 3.81]{MR2868112}. It remains to note that the expression
 \[
\frac{P_n(q)}{(1-q)^n} 
 \]
is precisely the Hilbert series of the Stanley--Reisner ring (see e.g. \cite{MR725505}) of the Boolean lattice $2^{\{1,\ldots,n\}}$  which has a basis indexed by the same data (nested sets with multiplicities). 
\end{proof}

We are now ready to prove injectivity of the morphism of operads $\RB\to\RF$. In fact, it costs nothing to prove a slightly more general result. Recall that one can consider, for any given $\theta\in\k$, \emph{Rota--Baxter algebras of weight $\theta$} which are algebras equipped with an associative commutative product and an endomorphism $R$ satisfying the identity
 \[
R(a)R(b)=R(R(a)b+aR(b)+\theta ab).
 \]
For $\theta=0$, one recovers the usual Rota--Baxter algebras, and for $\theta=1$ this is the summation by parts identity 
 \[
S(a)S(b)=S(S(a)b+aS(b)+ab)
 \]
for the operator on sequences 
 \[
S\colon (a_n)_{n\in\mathbb{N}}\mapsto \left(\sum_{k=1}^n a_k\right)_{n\in\mathbb{N}}.
 \] 
We shall denote the corresponding operad by $\RB^\theta$. Note that Loday defined in \cite{MR2676956} an operad $\calO^F$ for any one-dimensional formal group law $F$, that is, a formal power series $F(x_1,x_2)\in\k[[x_1,x_2]]$ such that 
\begin{gather*}
F(x_1,x_2)-x_1-x_2\in\mathfrak{m}^2,\\
F(F(x_1,x_2),x_3)=F(x_1,F(x_2,x_3)),
\end{gather*}
where $\mathfrak{m}=(x_1,x_2)$ is the maximal ideal of $\k[[x_1,x_2]]$. Namely, one can put $\calO^F(n)=\k[[x_1,\ldots,x_n]]$ and let
 \[
(f\circ_i g)(x_1,\ldots,x_{m+n-1})=
f(x_1,\ldots,x_{i-1},F_n(x_i,\ldots,x_{i+n-1}),x_{i+n},\ldots,x_{m+n-1})g(x_i,\ldots,x_{i+n-1}),
 \]
where $F_n$ is the $n$-ary operation $F(F(\ldots F(F(x_1,x_2)x_3),\ldots),x_n)$. The binary operation $\mu:=1\in\k[[x_1,x_2]]$ always generates a commutative suboperad of $\calO^F$. If, as in the case $F(x_1,x_2)=x_1+x_2$, the formal group law is given by a polynomial (or by a rational function), the composition rule defines an operad structure on the level of polynomials (or on rational functions), i.e. one can replace $\k[[x_1,\ldots,x_n]]$ with $\k[x_1,\ldots,x_n]$ (or with $\k(x_1,\ldots,x_n)$). In particular, for the formal group law 
 \[
F^\theta(x_1,x_2)=x_1+x_2+\theta x_1x_2,
 \]
we may define an operad structure on the level of rational functions, which we denote $\RF^\theta$. For $\theta=0$, Loday proved that this operad is isomorphic to the operad $\Mou$ introduced by Chapoton \cite{MR2363304} who in particular discussed the relationship between this operad and the operad of dendriform algebras. For $\theta=1$, this operad was considered in \cite{MR3537825,MR3071832}, and its relationship with tridendriform algebras was discussed. In the view of \cite{MR2357335}, one may view this as a hint at some underlying Rota--Baxter context. We shall make this precise as follows.

\begin{theorem}
The operations 
 \[
\mu:=1\in\RF^\theta(2) \text{ and } R:=\frac1{x_1}\in\RF^\theta(1)
 \] 
satisfy the identities of Rota--Baxter algebras of weight $\theta$. Moreover, they do not satisfy any further identities, and thus generate a suboperad isomorphic to $\RB^\theta$.
\end{theorem}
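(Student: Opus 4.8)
The plan is to prove the two assertions separately: first that $\mu=1$ and $R=\frac1{x_1}$ satisfy the weight-$\theta$ Rota--Baxter identity in $\RF^\theta$, and second that they satisfy no further relations. For the first part, I would simply compute both sides of $R(a)R(b)=R(R(a)b+aR(b)+\theta ab)$ in $\RF^\theta(2)$ using the composition rule for $\calO^{F^\theta}$. The operation $R$ acts on a unary slot by multiplying by $\frac1{x_1}$ after substituting the formal group law into the corresponding variable; concretely, $\mu\circ_1 R$ and $\mu\circ_2 R$ produce $\frac{1}{x_1}$ and $\frac{1}{x_2}$ in $\RF^\theta(2)$, while applying $R$ to a binary element $h(x_1,x_2)$ yields $\frac{1}{F^\theta(x_1,x_2)}h(x_1,x_2)=\frac{h(x_1,x_2)}{x_1+x_2+\theta x_1 x_2}$. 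Plugging these in, the left-hand side is $\frac{1}{x_1 x_2}$, and the right-hand side is $\frac{1}{x_1+x_2+\theta x_1x_2}\left(\frac1{x_1}+\frac1{x_2}+\theta\right)=\frac{1}{x_1+x_2+\theta x_1x_2}\cdot\frac{x_1+x_2+\theta x_1x_2}{x_1x_2}=\frac1{x_1x_2}$; the identity follows. This is a routine verification and I would present it compactly.

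The second part — that $R$ and $\mu$ generate a suboperad isomorphic to $\RB^\theta$, i.e.\ that the natural surjection $\RB^\theta\twoheadrightarrow\RF^\theta$ induced by $\mu\mapsto 1$, $R\mapsto\frac1{x_1}$ is injective — is the substantive step, and here I would use \cref{prop:RB-monomials}. The argument should mirror the $\theta=0$ case but with extra care: one first notes that the weight grading (with $R$ of weight one, $\mu$ of weight zero) is preserved by the morphism, since composing with $\frac1{x_1}$ lowers total degree in the rational-function sense in a way that matches the weight bookkeeping, so it suffices to check injectivity weight-graded component by weight-graded component, each of which is finite-dimensional by the dimension count already established. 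Then I would show that the images of the nested Rota--Baxter monomials under the morphism are linearly independent in $\RF^\theta(n)$. For the formal group law $F^\theta$, the image of the monomial $R^{d_k}(\cdots(R^{d_1}(a_{I_1})a_{I_2\setminus I_1})\cdots)a_{\{1,\ldots,n\}\setminus I_k}$ is an explicit product of factors of the form $\bigl(\sum_{i\in I_j}x_i+\theta(\cdots)\bigr)^{-d_j}$, obtained by repeatedly substituting $F^\theta$ into the distinguished slot; the leading term (lowest total degree in the $x_i$'s, obtained by setting $\theta=0$) recovers exactly the corresponding $\RF^0=\Mou$ monomial, which is known to be part of a basis.

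Thus the cleanest route is a filtration/degeneration argument: filter $\RF^\theta(n)$ by total degree of rational functions (or equivalently, consider the associated graded with respect to $\theta\to 0$), observe that the morphism $\RB^\theta\to\RF^\theta$ degenerates to the morphism $\RB^0\to\RF^0$ studied in the $\theta=0$ case, and invoke injectivity there together with the matching of Hilbert series from \cref{prop:RB-monomials}. Since the number of nested Rota--Baxter monomials of each weight and arity equals the dimension of the corresponding weight-graded component of $\RB^\theta(n)$ — the Poincaré/Hilbert series computation of \cref{prop:RB-monomials} is manifestly independent of $\theta$, as the Rota--Baxter relation of weight $\theta$ is still homogeneous of weight two and the minimal model has the same underlying graded space — a surjection between finite-dimensional spaces of equal dimension whose associated graded is injective must itself be injective. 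The main obstacle is making the degeneration argument rigorous: one must choose the filtration on $\RF^\theta$ so that the $\theta$-dependent correction terms genuinely lie in strictly higher filtration and so that the induced map on the associated graded is precisely the $\theta=0$ morphism; I expect this to require a careful but ultimately elementary analysis of how $\theta x_ix_j$ corrections propagate through iterated operadic composition and through the unary operation $\frac1{x_1}$.
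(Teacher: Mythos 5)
Your verification of the weight-$\theta$ Rota--Baxter identity is exactly the paper's computation, and your overall strategy for injectivity --- reduce to the basis of nested Rota--Baxter monomials from \cref{prop:RB-monomials}, pass to $\theta=0$ by a specialization/degeneration, and conclude from linear independence of the resulting rational functions --- is also the paper's route. But there is a genuine gap at the decisive step: you assert that the images at $\theta=0$, namely the functions $\bigl((\textstyle\sum_{i\in I_1}x_i)^{d_1}\cdots(\sum_{i\in I_k}x_i)^{d_k}\bigr)^{-1}$ indexed by chains $I_1\subsetneq\cdots\subsetneq I_k$ with multiplicities, are ``known to be part of a basis'' of $\Mou=\RF^0$, and you propose to ``invoke injectivity'' of $\RB^0\to\RF^0$. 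That injectivity is not available elsewhere --- it is precisely the $\theta=0$ case of the theorem you are proving, and the appendix exists to establish it. The paper closes this gap with a concrete argument: assuming a nontrivial linear relation, pick a chain whose minimal set $\{i_1,\dots,i_{n_1}\}$ is itself minimal with respect to inclusion among those occurring, change coordinates so that $x_{i_1}+\cdots+x_{i_{n_1}}$ is a coordinate, expand the relation as a Laurent series in that coordinate, and induct to force all coefficients to vanish. Without this (or an equivalent independence proof, e.g.\ a citation to a precise statement about the mould operad), your argument is circular at its core.

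Two smaller points. First, the map $\RB^\theta\to\RF^\theta$ is not a surjection onto $\RF^\theta$, and your ``surjection between finite-dimensional spaces of equal dimension'' reasoning does not apply as stated since the target $\k(x_1,\dots,x_n)$ is infinite-dimensional; what you actually need is exactly that the images of the basis elements of each finite-dimensional weight component are linearly independent. Second, your claim that the count of nested monomials is independent of $\theta$ needs the fact that $\RB^\theta(n)$ is a free $\k[\theta]$-module with the nested monomials as basis; the paper gets this from the Gr\"obner basis argument of the cited minimal-model computation rather than from homogeneity of the relation alone (the weight-$\theta$ relation is \emph{not} homogeneous for the weight grading in which $R$ has weight one, since the term $\theta ab$ has weight zero while the others have weight two --- this is exactly why one must work over $\k[\theta]$ and argue freeness separately).
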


\begin{proof}
Note that we have the following compositions in the operad $\RF^\theta$:
\begin{gather*}
R(R(a_1)a_2)=\frac{1}{x_1+x_2+\theta x_1x_2}\cdot\frac1{x_1},\quad 
R(a_1R(a_2))=\frac{1}{x_1+x_2+\theta x_1x_2}\cdot\frac1{x_2},\\
R(a_1a_2)=\frac{1}{x_1+x_2+\theta x_1x_2},\quad
R(a_1)R(a_2)=\frac{1}{x_1}\cdot\frac{1}{x_2},
\end{gather*}
so the identity $R(a_1)R(a_2)=R(R(a_1)a_2+a_1R(a_2)+\theta a_1a_2)$
does indeed hold. Thus, there is a morphism of operads $\RB^\theta\to \RF^\theta$. Let us show that this morphism is injective. In fact, everything here is defined over the ring $\k[\theta]$, and we shall furnish an argument over this ring. Note that the Gr\"obner basis argument of \cite[Prop.~5.6]{MR3084563} essentially shows that the component $\RB^\theta(n)$ is a free $\k[\theta]$-module; moreover, the argument of Proposition~\ref{prop:RB-monomials} can be easily adapted to show that nested Rota--Baxter monomials form a basis of that module. To complete the proof, it remains to show that the images of the nested Rota--Baxter monomials
 \[
R^{d_k}(\cdots (R^{d_2}(R^{d_1}(a_{I_1})a_{I_2\setminus I_1}))\cdots a_{I_k\setminus I_{k-1}})a_{\{1,\ldots,n\}\setminus I_k},
 \]
that is, the rational functions
 \[
\frac1{(F^\theta_{I_1})^{d_1}(F^\theta_{I_2})^{d_2}\cdots (F^\theta_{I_k})^{d_k}} ,
 \]
where for $J=\{j_1,\ldots,j_s\}$ we put 
 \[
F^\theta_{J}:=F^\theta(F^\theta(\ldots F^\theta(F^\theta(x_{j_1},x_{j_2}),x_{j_3}),\ldots),x_{j_s}),
 \] 
are linearly independent. It is enough to show that their images under the specialization $\theta\mapsto 0$ are linearly independent. Suppose the contrary, so that there is a linear combination of these rational functions that is equal to zero. Let us consider the minimal subsets $\{i_1,\ldots,i_{n_1}\}$ of the chains of nested sets that appear in such a linear combination, and let us choose among those minimal subsets that is additionally minimal with respect to inclusion. Choosing a system of coordinates that has $x_{i_1}+\cdots +x_{i_{n_1}}$ as one of the new coordinates and expanding our rational function as a Laurent series in that coordinate, one easily sees by induction that all the coefficients of our linear combination must be equal to zero. 
\end{proof}

\printbibliography
\end{document}